\newtheorem{theorem}{Theorem}[section]
\newtheorem{lemma}[theorem]{Lemma}
\newtheorem{proposition}[theorem]{Proposition}
\newtheorem{corollary}[theorem]{Corollary}
\theoremstyle{definition}
\theoremstyle{remark}
\newtheorem{remark}[theorem]{Remark}
\theoremstyle{remark}
\numberwithin{equation}{section}
\newcommand{\R}{\mathbb{R}}
\newcommand{\diw}{\mathrm{div}\,}
\newcommand{\Ln}{\mathcal{L}^n}
\newcommand{\LL}{\mathbb{L}}
\newcommand{\HH}{{\mathcal H}^{n-1}}
\newcommand{\EEE}{\mathscr{E}}
\newcommand{\HHH}{\mathscr{H}}
\newcommand{\Om}{\Omega}
\newcommand{\zz}{\underline{0}}
\newcommand\Sing{\textup{Sing}}
\newcommand\Reg{\textup{Reg}}
\newcommand{\cL}{\mathcal{L}}
\renewcommand{\d}{\,{\rm d}}
\title[The classical obstacle problem with H\"older continuous coefficients]{The classical obstacle problem with 
H\"older continuous coefficients}
\author[G. Andreucci]{Giovanna Andreucci}
\address{Universit\`a di Roma ``La Sapienza''}
\curraddr{P.le Aldo Moro 5,  Roma (Italy)}
\email{giovanna.andreucci@uniroma1.it}
\author[M.~Focardi]{Matteo Focardi}
\address{DiMaI, Universit\`a degli Studi di Firenze}
\curraddr{Viale Morgagni 67/A, 50134 Firenze (Italy)}
\email{matteo.focardi@unifi.it}
\thanks{M.~F. is a member of the Gruppo Nazionale per
l'Analisi Matematica, la Probabilit\`a e le loro Applicazioni (GNAMPA)
of the Istituto Nazionale di Alta Matematica (INdAM)}
\subjclass[2010]{Primary: 35R35, 49N60}
\keywords{Classical obstacle problem, free boundary, monotonicity formulas}
\dedicatory{{This paper is dedicated to Prof. Emmanuele Di Benedetto\\ 
for having taught us, with his body of work, the taste for hard analysis problems.}}
\date{}
\begin{document}
\begin{abstract}
Weiss' and Monneau's type quasi-monotonicity formulas are established for quadratic energies 
having matrix of coefficients which are %H\"older continuous. Actually, 
Dini, double-Dini continuous, respectively. %, suffice. 
Free boundary regularity for the corresponding classical obstacle problems under H\"older 
continuity assumptions is then deduced. %Possible further extensions are also discussed.
\end{abstract}

\maketitle

%%%%%%%%%%%%%%%%%%%%%%%%%%%%%%%%%%%
%
%	INTRODUCTION
%
%%%%%%%%%%%%%%%%%%%%%%%%%%%%%%%%%%%
\section{Introduction}\label{s:intro}

In the last years several contributions have been devoted to the extension of the regularity theory for obstacle type problems to the case in which the involved linear elliptic operator in divergence form has coefficients with low regularity \cite{Blank01,GSVG14, BlankHao15,BlankHao15bis,FoGeSp15,KRS16, KRS17, RS17,Ger17,FoGerSp20,JPSm}. 
%even for almost minimizers \cite{DavTor,DavEngTor,JeonPetr,JeonSmitVegaPetr, DeSilvaSavin}. 
The aim of this note is to make another step in that direction for the classical obstacle problem by establishing Weiss' and Monneau's quasi-monotonicity formulas for quadratic forms having matrix of coefficients which are Dini, double-Dini continuous, respectively (for the sake of simplicity the lower order terms have the same regularity). Such results are instrumental to pursue the variational approach to establish the smoothness of the corresponding free boundaries. To keep the presentation as simple as possible the related free boundary analysis is performed 
only in the H\"older continuous case.

More precisely, in what follows we consider the functional $\mathcal{E}:W^{1,2}(\Omega)\to\R$ given by
 \begin{equation}\label{e:enrg}
 \mathcal{E}(v):=\int_\Omega \big(\langle \mathbb{A}(x)\nabla v(x),\nabla v(x)\rangle +2h(x)v(x)\big)\,\d x,
\end{equation}
and study regularity issues related to its unique minimizer $w$ on the set %${\mathbb{K}_{\psi,g}}$ (see also \eqref{e:Kpsi})
\[
\mathcal{K}_{\psi,g} := \big\{v\in W^{1,2}(\Omega):\,v\geq \psi\;\;\Ln\text{-a.e. on } \Omega,\,
\textup{Tr}(v)=g \,\text{ on } \partial\Omega\big\}\,.
\]
Existence and uniqueness of the minimizer is a standard issue for the problem under the ensuing hypotheses.
For a bounded open set $\Omega\subset \R^n$, $n\geq 2$,
we consider in what follows functions $\psi\in C^{1,1}_{\mathrm{loc}}(\Omega)$ and 
$g\in H^{\sfrac12}(\partial\Omega)$, such that $\psi\leq g$ $\mathcal{H}^{n-1}$-a.e on $\partial\Omega$, 
a matrix-valued field $\mathbb{A}:\Omega\to \R^{n\times n}$ and a function $f:\Omega\to \R$ satisfying:
\begin{itemize}
\item[(H1)] $\mathbb{A}(x)=\left(a_{ij}(x)\right)_{i,j=1,\dots,n}\in L^\infty(\Omega,\R^{n\times n})$ is symmetric and coercive, that is $a_{ij}=a_{ji}$ $\cL^n$-a.e. in $\Omega$ for all $i,\,j\in\{1,\ldots,n\}$,
  and for some $\Lambda\geq 1$ 
  \begin{equation}\label{A coerc cont}
   \Lambda^{-1}|\xi|^2\leq \langle \mathbb{A}(x)\xi, \xi\rangle \leq \Lambda|\xi|^2
  \end{equation}
 for $\cL^n$-a.e. $x\in\Omega$, and for all $\xi\in \R^n$; 
\item[(H2)] $f\in L^\infty(\Omega)$, $f:=h-\diw(\mathbb{A}\nabla\psi)>c_0$ $\cL^n$-a.e.
 on $\Omega$, for some $c_0>0$;
 \item[(H3)]  $\mathbb{A}$ and $f$ are $\alpha$-H\"older continuous, for some $\alpha\in(0,1]$,
\end{itemize}
(cf. Section~\ref{s:prel} for the notation).
Under assumptions (H1)-(H3) we establish a stratification result for the free boundary of solutions (see also Remark~\ref{r:extensions} for further possible extensions). 
\begin{theorem}\label{t:linear}
Assume (H1)-(H3), and let $w$ be the (unique) minimizer 
of $\mathcal{E}$ in \eqref{e:enrg} on $\mathcal{K}_{\psi,g}$.

Then, $w$ is $C^{1,\alpha}_{\mathrm{loc}}(\Omega)$, and the free boundary 
decomposes as $\partial \{w = \psi\} \cap \Omega = \Reg(w) \cup \Sing(w)$, where 
$\Reg(w)$ and $\Sing(w)$ are called its regular and singular part, respectively. 
Moreover, $\Reg(w) \cap \Sing(w) = \emptyset$ and
\begin{itemize}
\item[(i)] %if $a>2$ in (H6), then 
$\Reg(w)$ is relatively open in $\partial \{w = \psi\}$
and, for every point $x_0 \in \Reg(w)$, there exist $r=r(x_0)>0$ such that $\partial \{w = \psi\}\cap B_r(x)$ 
is a $C^{1,\beta}$ $(n-1)$-dimensional manifold, 
%$C^1$ $(n-1)$-dimensional manifold with normal vector absolutely continuous, 
for some exponent $\beta\in (0,1)$. 
 
% In particular if $f$ is H\"{o}lder continuous there e%xists $r>0$ such that $\partial \{w = \psi\}\cap B_r(x)$ 
% is a $C^{1,\beta}$ $(n-1)$-dimensional manifold for some exponent $\beta\in (0,1)$.

\item[(ii)] %if $a\geq1$ in (H6), then 
$\Sing(w) = \cup_{k=0}^{n-1} S_k$, with $S_k$ contained in the union of at most countably many submanifolds of dimension $k$ and class $C^1$.
\end{itemize}
\end{theorem}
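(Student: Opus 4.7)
The plan is to follow the variational approach to obstacle-type free boundaries pioneered by Weiss and refined by Monneau and Caffarelli, exploiting the two quasi-monotonicity formulas established earlier in the paper. First, standard elliptic regularity for the obstacle problem (Kinderlehrer--Stampacchia, Caffarelli) yields $w\in C^{1,\alpha}_{\mathrm{loc}}(\Omega)$ as soon as $\mathbb{A}$ and $f$ are $\alpha$-H\"older. Setting $u:=w-\psi$ reduces the analysis to a non-negative minimizer of an energy of the same form with vanishing obstacle, right-hand side $f>c_0>0$, and unchanged coefficient regularity; the free boundary becomes $\partial\{u>0\}\cap\Omega$.

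At every free boundary point $x_0$ I consider the quadratic rescalings $u_{x_0,r}(y):=u(x_0+ry)/r^2$. The Weiss-type quasi-monotonicity formula (valid under Dini, and a fortiori under H\"older, continuity) yields $H^1$-compactness of $\{u_{x_0,r}\}_{r>0}$, forces every blow-up to be $2$-homogeneous, and produces the limit $W(0^+,u,x_0)$. A direct classification then shows that, after an affine change of variables diagonalizing $\mathbb{A}(x_0)$, blow-ups at free boundary points are either half-space solutions or $2$-homogeneous non-negative polynomials $p$ satisfying $\mathrm{tr}(\mathbb{A}(x_0)D^2 p)=f(x_0)$. This gives the decomposition $\partial\{u>0\}\cap\Omega=\Reg(w)\cup\Sing(w)$.

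For the regular part, upper semicontinuity of $x_0\mapsto W(0^+,u,x_0)$ (a consequence of the quasi-monotonicity) delivers relative openness of $\Reg(w)$, while Caffarelli's directional-monotonicity argument, performed in the variables diagonalizing $\mathbb{A}(x_0)$, forces $\{u>0\}$ to be a Lipschitz graph near every regular point; a boundary-Harnack argument for the variable-coefficient operator then promotes this to $C^{1,\beta}$, with $\beta=\beta(n,\alpha,\Lambda)\in(0,1)$. For the singular part, the Monneau-type quasi-monotonicity (valid under double-Dini, hence H\"older, continuity) upgrades existence to \emph{uniqueness} of the blow-up $p_{x_0}$ and to continuous dependence $x_0\mapsto p_{x_0}$ on $\Sing(w)$; stratifying $\Sing(w)=\bigcup_{k=0}^{n-1}S_k$ by $k:=\dim\ker D^2 p_{x_0}$, the Whitney extension theorem combined with Federer's implicit-function-type structure theorem then covers each $S_k$ by countably many $k$-dimensional submanifolds of class $C^1$.

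The main technical hurdle concentrates in the error terms generated by freezing $\mathbb{A}$ and $f$ at $x_0$ inside the monotonicity identities: these must decay at a rate compatible with Dini, respectively double-Dini, integrability in order for $W(0^+,u,x_0)$ and for the Monneau comparison to retain the structural consequences known in the constant-coefficient theory. Once the two quasi-monotonicity formulas are in hand, the free-boundary analysis becomes a controlled perturbation of Caffarelli's and Monneau's original arguments, with quantitative rates inherited from the H\"older exponent $\alpha$.
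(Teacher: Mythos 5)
Your plan follows essentially the same route as the paper, which itself delegates almost all of the free-boundary analysis to \cite{FoGeSp15} once the Weiss and Monneau quasi-monotonicity formulas are in hand: $2$-homogeneity of blow-ups from Weiss, nondegeneracy from quadratic detachment, classification of $2$-homogeneous global solutions, openness of $\Reg$ via upper semicontinuity of the Weiss energy, Caffarelli's directional monotonicity plus boundary Harnack for $C^{1,\beta}$ regularity of $\Reg$, and Monneau's formula plus Whitney extension for the stratification of $\Sing$. One small attribution correction: the compactness of the rescalings $u_{x_0,r}$ is obtained from uniform Schauder estimates under the H\"older hypothesis (Proposition~\ref{p:prop u_r limitata}), not from the Weiss formula itself — the latter supplies the $2$-homogeneity of blow-ups and the existence of $\Phi_u(0^+)$, while the non-triviality of blow-ups comes from the quadratic detachment in Proposition~\ref{p:quadratic growth detachment}. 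Otherwise the architecture and the role of the two quasi-monotonicity formulas match the paper's argument.
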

In the case of smooth matrix fields, Theorem~\ref{t:linear} collects 
the fundamental contributions of Caffarelli to the classical obstacle problem (cf. for instance \cite{Caf77, Caf80, Caf98-Fermi, Caf98} 
and the books \cite{KS, CS, PSU} for more details and references 
also on related issues). 

In the last years Theorem~\ref{t:linear} has been extended to the case in which $\mathbb{A}$ either is
Lipschitz continuous in \cite{FoGeSp15} or belongs to a fractional Sobolev space $W^{1+s,p}$  in \cite{Ger17}, with $sp>1$ and 
$p\geq n\wedge\frac{n^2}{n(1+s)-1}$, or belongs to the Sobolev space $W^{1,p}$ with $p>n$ in \cite{FoGerSp20}. 
We point out that in all those cases the involved matrix fields $\mathbb{A}$ turns out to be H\"older continuous in view of Sobolev type embeddings. Applications of the results in \cite{FoGeSp15,FoGerSp20} to the study of the obstacle problem for a large class of nonlinear energies were then given in \cite{FoGerSp17}. 
In addition, counterexamples to smoothness of the free boundary are shown in case of the Laplacian 
if $f$ is not Dini continuous in \cite{Blank01}, and in case $f$ is constant and $\mathbb {A}$ is 
$\mathrm{VMO}$ in \cite[Remark~4.2]{BlankHao15bis}.
In this respect, if both the coefficients $\mathbb{A}$ and $f$ are assumed to be in $\mathrm{VMO}$, 
Reifenberg vanishing flatness of $\Reg(u)$ %, and then $C^1$ smoothness, 
has been established in \cite{BlankHao15bis}.

The papers \cite{FoGeSp15,Ger17,FoGerSp20} follow the variational approach to free boundary analysis 
remarkably developed by Weiss \cite{Weiss} and by Monneau \cite{Monneau03}. Central to this method
are quasi-monotonicity formulas for suitable quantities, named in literature as Weiss' and Monneau's 
quasi-monotonicity formulas.
The original papers by Weiss and Monneau are based on some key integral identities that hold true in the case of the Laplacian. The search for generalizations need several new insights and technical tools. Let us briefly recall the key ideas introduced in \cite{FoGeSp15,Ger17,FoGerSp20}. 
%to obtain the mentioned extensions of Weiss' and Monneau's quasi-monotonicity formulas.  
An extension of the Rellich and Ne\v{c}as' inequality due to Payne and Weinberger (cf. \cite{Kukavica}) is employed 
in the papers \cite{FoGeSp15,Ger17}. On a technical side, the matrix field $\mathbb{A}$ is differentiated, therefore enough smoothness of $\mathbb{A}$ is needed. 
Instead, a different approach, leading to a weakening of the regularity assumptions on $\mathbb{A}$, has been pursued in \cite{FoGerSp20}
following an observation in \cite[Remark 4.9]{FoGeSp15}. 
 The main difference in \cite{FoGerSp20} with respect to \cite{FoGeSp15,Ger17} concerns the monotone quantity itself: 
rather than considering the natural quadratic energy associated to the obstacle problem under study, a related constant coefficient quadratic form has been used. 
Higher order regularity of solutions, namely $W^{2,p}_{\mathrm{loc}}$, 
in combination with their well-known quadratic growth from free boundary points then allow to conclude. 
In this note we push forward the ideas in \cite{FoGerSp20} and extend the conclusions there to H\"older continuous matrix fields $\mathbb{A}$ for which the above mentioned $W^{2,p}_{\mathrm{loc}}$ regularity of solutions is not guaranteed.
More precisely, we prove Weiss' formula for Dini continuous matrix fields $\mathbb{A}$ and Monneau's formula for double-Dini continuous ones. For simplicity we do not distinguish the degrees of regularity of $\mathbb{A}$ and that of $f$. Let us stress that in the case of the Laplacian, low regularity assumptions on $f$ have already been considered: Dini continuity has been deeply investigated in \cite{Blank01}, and Dini continuity in $L^p$ sense, $p$ suitable, in \cite{Monneau09} (cf. also \cite{BlankHao15bis} for both $\mathbb{A}$ and $f$ in $\mathrm{VMO}$; \cite{FoGerSp20} for Theorem~\ref{t:linear} if $\mathbb{A}$ is $W^{1,p}$, $p>n$, and 
$f$ double-Dini continuous). 
A potential extension of the related free boundary analysis contained in 
Theorem~\ref{t:linear} is discussed in Remark~\ref{r:extensions}
(cf. also Remark~\ref{r:Weiss generalized}).
Note that if $\alpha=1$ we recover the results in \cite{FoGeSp15} with, partially, a different proof.

We close this introduction briefly resuming the structure of the paper: standard preliminaries for the classical obstacle problem 
are collected in Section~\ref{s:prel}. 
The mentioned generalizations of Weiss' and Monneau's quasi-monotonicity formulas are dealt with in Section~\ref{s:q-mon formula}, 
finally Section~\ref{s:applications} contains the applications to the free boundary stratification problem.

%%%%%%%%%%%%%%%%%%%%%%%%%%%%%%%%%%%
%%%%%%%%%%%%%%%%%%%%%%%%%%%%%%%%%%%
%%%%%%%%%%%%%%%%%%%%%%%%%%%%%%%%%%%
%%%%%%%%%%%%%%%%%%%%%%%%%%%%%%%%%%%
%%%%%%%%%%%%%%%%%%%%%%%%%%%%%%%%%%%
%%%%%%%%%%%%%%%%%%%%%%%%%%%%%%%%%%%
%%%%%%%%%%%%%%%%%%%%%%%%%%%%%%%%%%%
%
%	SECTION 1
%
%%%%%%%%%%%%%%%%%%%%%%%%%%%%%%%%%%%
%%%%%%%%%%%%%%%%%%%%%%%%%%%%%%%%%%%
%%%%%%%%%%%%%%%%%%%%%%%%%%%%%%%%%%%
%%%%%%%%%%%%%%%%%%%%%%%%%%%%%%%%%%%
%%%%%%%%%%%%%%%%%%%%%%%%%%%%%%%%%%%
%%%%%%%%%%%%%%%%%%%%%%%%%%%%%%%%%%%
%%%%%%%%%%%%%%%%%%%%%%%%%%%%%%%%%%%
\section{Preliminaries}\label{s:prel}

% 
% \begin{proposition}
% There exists a unique solution for the minimum problem 
% \begin{equation}\label{p:problema ad ostacolo}
% \inf_K \mathscr{E}[\cdot],
% \end{equation}
% where 
% \begin{equation}
%  K:=\{v\in W^{1,2}(\Omega)\, |\,  v\geq 0\, \Ln\textit{-a.e. on}\,\, \Omega,\, \mathrm{Tr}(v)=g\,\textit{on}\,\, \partial\Omega \},
% \end{equation}
% $g\in H^\frac{1}{2}(\partial\Omega)$ being a nonnegative function.
% \end{proposition}
\subsection{Notation}
Throughout the whole paper, the inner product in $\R^n$ is denoted by $\langle\cdot,\cdot\rangle$, and the induced norm by $|\cdot|$.
To distinguish it from the norm in $\R^{n\times n}$ we use the symbol $\|\cdot\|$ for the latter.
%generically the norm in any Euclidean space by $\|\cdot\|$, in the scalar case we use also $|\cdot|$. 
We use standard notations for Lebesgue, Sobolev and H\"older spaces, quoting the necessary results when needed.

\subsection{Reduction to the zero obstacle case}

%Throughout the paper we use the assumptions introduced in Section~\ref{s:intro}. 
We first reduce ourselves to the zero obstacle problem. 
Let $w$ be the unique minimizer of $\mathcal{E}$ over $\mathcal{K}_{\psi,g}$, 
and define $u:=w-\psi$. Then, $u$ is the unique minimizer of 
 \begin{equation}\label{e:enrg2}
 \mathscr{E}(v):=\int_\Omega \big(\langle \mathbb{A}(x)\nabla v(x),\nabla v(x)\rangle +2f(x)v(x)\big)\,\d x,
\end{equation}
over $\mathbb{K}_{\psi,g}:=\mathcal{K}_{0,\psi-g}$,
%\[
%\mathbb{K}_{\psi,g} := \big\{v\in W^{1,2}(\Omega):\,v\geq 0\;\;\Ln\,\text{ a.e. on } \Omega,\,
%\textup{Tr}(v)=g-\psi \,\text{ on } \partial\Omega\big\}\,,
%\]
where $f=h-\diw(\mathbb{A}\nabla\psi)$. Clearly, $\partial\{w=\psi\}\cap\Omega=\partial\{u=0\}\cap\Omega$, therefore 
we shall establish all the results in Theorem~\ref{t:linear} for $u$.
Notice that assumption (H2) is formulated exactly in terms of $f$,
and that, for the zero obstacle problem, i.e.~$\psi=0$ the positivity assumption on $f$ in (H2) involves only the lower order term $h$ 
in the integrand and not the matrix field $\mathbb{A}$. 
It is also clear that given (H3), the $L^\infty$ assumptions in (H1) 
and (H2) are redundant and the inequalities there hold in a pointwise sense. Despite this, we state (H1) and (H2) as in the Introduction because for some of the ensuing results we shall not need (H3) and we will sometimes substitute 
it with a weaker version (cf. (H4) and (H5) in what follows).

The first step is to prove that $u$ satisfies a PDE both in the distributional sense and $\cL^n$-a.e. on $\Omega$.
The next result has been established in \cite[Proposition 3.2]{FoGerSp17} more generally for variational inequalities
inspired by some arguments in \cite{Weiss} (see also \cite{Fu90,FuMi00}). 
The leading vector field in \cite[Proposition 3.2]{FoGerSp17} is assumed to be $C^{1,1}_{\mathrm{loc}}$ in the full set of variables for the sake of existence of solutions in that general setting. 
On the contrary, such a regularity in the space variable is never used to deduce the conclusion of \cite[Proposition~3.2]{FoGerSp17}.
We provide below the proof for the readers' convenience.
\begin{proposition}\label{p:PDE u}
Assume (H1) and (H2). Let $u$ be the minimizer of $\mathscr{E}$ on $\mathbb{K}_{\psi,g}$. Then, there exists a function 
$\zeta\in L^\infty(\Omega)$ such that 
 \begin{equation}\label{e:PDE_u}
  \mathrm{div}(\mathbb{A}\nabla u)=f-\zeta %\qquad \textit{$\Ln$-a.e. on $\Omega$ and in $\mathcal{D}'(\Omega)$}. 
 \end{equation}
$\Ln$-a.e. on $\Omega$ and in $\mathcal{D}'(\Omega)$, with 
\begin{equation}\label{e:zeta}
0\leq\zeta\leq f\chi_{\{u=0\}}\,.
\end{equation}
%Moreover, if (H3) holds then $u\in C^{1,\alpha}_{\mathrm{loc}}(\Omega)$. 
In particular, $u\in C^{0,\beta}_{\mathrm{loc}}(\Omega)$ for 
some $\beta\in(0,1)$. 
\end{proposition}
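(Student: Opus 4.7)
The plan is to derive two variational inequalities from the minimality of $u$, use them together to identify $\zeta:=f-\mathrm{div}(\mathbb{A}\nabla u)$ as an $L^\infty$ function, and then invoke De Giorgi--Nash--Moser for the H\"older continuity.

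First I would establish that the distribution $\mu:=f-\mathrm{div}(\mathbb{A}\nabla u)$ is a nonnegative Radon measure. This is the easy direction: the test $v=u+t\varphi$ with $t>0$ and $\varphi\in C_c^\infty(\Omega)$, $\varphi\ge0$, is admissible, and computing $\frac{d}{dt}\mathscr{E}(u+t\varphi)\big|_{t=0^+}\ge 0$ yields $\int(\langle \mathbb{A}\nabla u,\nabla\varphi\rangle +f\varphi)\d x\ge 0$, i.e.\ $\mu\ge 0$ in $\mathcal{D}'(\Omega)$.

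The main step is the upper bound, and this is where the argument really rests. Because only (H1)--(H2) are in force, I cannot differentiate $\mathbb{A}$, hence I would not use $W^{2,p}$ regularity to put $\nabla^2u=0$ on $\{u=0\}$. Instead, pick a smooth nondecreasing cutoff $\eta_\epsilon\colon[0,\infty)\to[0,1]$ vanishing on $[0,\epsilon/2]$ and equal to $1$ on $[\epsilon,\infty)$, and for $\varphi\in C_c^\infty(\Omega)$, $\varphi\ge 0$, test the minimality against the competitor
\[
v_t:=u-t\,\eta_\epsilon(u)\,\varphi,
\]
which for $0<t\le \epsilon/(2\|\varphi\|_\infty)$ satisfies $v_t\ge 0$ and $v_t=u$ on $\partial\Omega$ (the support of $\varphi$ is compact and $\eta_\epsilon(0)=0$), hence lies in $\mathbb{K}_{\psi,g}$. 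Taking $t\to 0^+$ and expanding the gradient of the product gives
\[
\int_\Omega\Bigl(\eta'_\epsilon(u)\,\varphi\,\langle \mathbb{A}\nabla u,\nabla u\rangle+\eta_\epsilon(u)\langle \mathbb{A}\nabla u,\nabla\varphi\rangle+f\,\eta_\epsilon(u)\varphi\Bigr)\d x\le 0.
\]
Using ellipticity to drop the first (nonnegative) term, letting $\epsilon\downarrow 0$ via dominated convergence, and using Stampacchia's result $\nabla u=0$ $\Ln$-a.e.\ on $\{u=0\}$ to replace $\chi_{\{u>0\}}\langle \mathbb{A}\nabla u,\nabla\varphi\rangle$ with $\langle \mathbb{A}\nabla u,\nabla\varphi\rangle$, I would arrive at
\[
\int_\Omega \langle \mathbb{A}\nabla u,\nabla\varphi\rangle\d x+\int_{\{u>0\}} f\varphi\d x\le 0.
\]

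Combining this with $\mu(\varphi)\ge 0$ from the first step produces $0\le \mu(\varphi)\le \int_{\{u=0\}}f\varphi\d x$ for every nonnegative $\varphi\in C_c^\infty(\Omega)$. Hence $\mu$ is absolutely continuous with respect to $\Ln$ with a density $\zeta\in L^\infty(\Omega)$ satisfying $0\le \zeta\le f\chi_{\{u=0\}}$, so that $\mathrm{div}(\mathbb{A}\nabla u)=f-\zeta$ both distributionally and pointwise $\Ln$-a.e. The final $C^{0,\beta}_{\mathrm{loc}}$ assertion then follows from the De Giorgi--Nash--Moser theorem applied to $u\in W^{1,2}(\Omega)$, since $\mathbb{A}$ is bounded and uniformly elliptic by (H1) and the right-hand side $f-\zeta$ is in $L^\infty(\Omega)$. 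The single delicate point is the admissibility and $t$-scaling of the cutoff competitor $v_t$; once this is in hand the rest is straightforward limiting.
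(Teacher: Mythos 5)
Your proof is correct and reaches the same conclusion as the paper, but the key upper bound is obtained by a genuinely different competitor construction. The paper tests minimality against $v_\varepsilon = (u+\varepsilon\varphi)\vee 0$ for $\varphi$ of arbitrary sign, then tracks three integrals over the shrinking set $\Omega_\varepsilon=\{u+\varepsilon\varphi<0\}$; the limit $\chi_{\Omega_\varepsilon}\to\chi_{\{u=0\}\cap\{\varphi<0\}}$ together with $\nabla u=0$ a.e.\ on $\{u=0\}$ yields the two-sided bound $\int_{\{u=0\}\cap\{\varphi<0\}}f\varphi\le\mu(\varphi)\le\int_{\{u=0\}\cap\{\varphi>0\}}f\varphi$. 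You instead perturb inward only where $u$ is bounded away from zero, via $v_t=u-t\,\eta_\epsilon(u)\varphi$ with $\varphi\ge 0$, drop the manifestly nonnegative term $\eta'_\epsilon(u)\varphi\langle\mathbb A\nabla u,\nabla u\rangle$ by ellipticity, and let $\epsilon\downarrow 0$. Both routes rest on the same two ingredients (one-sided variation for the sign $\mu\ge 0$, plus $\nabla u=0$ a.e.\ on the contact set) and avoid differentiating $\mathbb A$, which is the essential constraint under (H1)--(H2). Your cutoff argument is somewhat more economical in that it only requires nonnegative test functions and a single limiting parameter, while the paper's version produces the symmetric two-sided inequality in one pass; for the purpose of identifying $\zeta$ as an $L^\infty$ density with $0\le\zeta\le f\chi_{\{u=0\}}$ the two are entirely equivalent. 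One small point worth making explicit if you write this up: the dominated convergence step as $\epsilon\downarrow 0$ uses that $u\ge 0$ and $\eta_\epsilon(u)\to\chi_{\{u>0\}}$ pointwise with $0\le\eta_\epsilon(u)\le 1$, and the Lebesgue-point argument that upgrades the integral inequality $0\le\mu(\varphi)\le\int_{\{u=0\}}f\varphi$ to the pointwise bound $0\le\zeta\le f\chi_{\{u=0\}}$ a.e.\ deserves a sentence.
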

\begin{proof}
	Let $ \varphi \in C^{\infty}_{c}(\Omega) $, and for all 
	$\varepsilon>0$ define $ v_{\varepsilon} \coloneqq (u + \varepsilon \varphi) \vee 0$. Note that $ v_{\varepsilon} $ belongs to the set $\mathbb{K}_{\psi,g}$.
	If $ \varphi \geq 0 $ then $ v_{\varepsilon}= u + \varepsilon \varphi $ and, since $ u $ is the minimizer we  have
	%\begin{align*}
	%	0&\leq \int_{\Omega} \langle \mathbb{A}(x)\nabla v_{\varepsilon} ,\nabla v_{\varepsilon} \rangle\d x - \int_{\Omega} \langle \mathbb{A}(x)\nabla u ,\nabla u \rangle \d x + \int_{\Omega}2f(x) (v_{\varepsilon}-u) \d x\\
	%	&= \varepsilon^{2}\int_{\Omega} \langle \mathbb{A}(x)\nabla \varphi ,\nabla \varphi \rangle\d x+ 2\varepsilon \int_{\Omega} \langle \mathbb{A}(x)\nabla u, \nabla \varphi)\rangle \d x +2 \varepsilon \int_{\Omega}f(x) \varphi \d x.
	%\end{align*}
	%
	\begin{align*}
	-\frac{\varepsilon}{2} \int_{\Omega} \langle \mathbb{A}(x)\nabla \varphi ,\nabla \varphi \rangle\d x \leq \int_{\Omega} \langle \mathbb{A}(x)\nabla u, \nabla \varphi\rangle \d x + \int_{\Omega}f(x) \varphi \d x\,.
	\end{align*}
In turn, for $ \varepsilon \rightarrow 0 $ this implies that
	\begin{align*}
	0 \leq \int_{\Omega} \langle \mathbb{A}(x)\nabla u, \nabla \varphi)\rangle \d x + \int_{\Omega}f(x) \varphi \d x \qquad \forall \varphi 	\in C^{\infty}_{c}(\Omega),\; \varphi \geq 0.
	\end{align*}
	The last inequality yields that the distribution $\mu :=-\mathrm{div}\big( \mathbb{A}(\cdot) \nabla u\big)+f(\cdot)\mathcal{L}^{n}\llcorner \Omega$ is a non-negative Radon measure on $\Omega$.
	
Let now $ \varphi $ and $ v_{\varepsilon} $ be as above without any assumption on the sign of $ \varphi $, and set 
\[
\Omega_{\varepsilon}:=\lbrace x\in \Omega:\, u+\varepsilon \varphi < 0 \rbrace\,.
\] 
It is clear that %for $x\in \Omega_{\varepsilon} $ we have  
$v_{\varepsilon}=0$ and $\varphi<0$ on $ \Omega_{\varepsilon}$, and that $v_{\varepsilon}=u+\varepsilon \varphi$ on 
$\Omega\setminus \Omega_{\varepsilon}$. 
Using again that $u$ is minimizing, that $f\geq c_0>0$ on 
$\Omega$ by (H2), and that $ v_{\varepsilon} \in \mathbb{K}_{\psi,g} $ we obtain
\begin{align*}
0\leq&\;  \mathscr{E}(v_\varepsilon)-\mathscr{E}(u)
=- \int_{\Omega_{\varepsilon}} \langle \mathbb{A}\nabla u ,\nabla u \rangle \d x 	+2\varepsilon\int_{\Omega\setminus\Omega_\varepsilon}\langle \mathbb{A}\nabla u,\nabla \varphi \rangle \d x
\\&+\varepsilon^{2} \int_{\Omega\setminus \Omega_{\varepsilon}}\langle \mathbb{A}\nabla \varphi ,\nabla \varphi\rangle\d x
+2\int_{\Omega\setminus\Omega_{\varepsilon}}f (u+\varepsilon\varphi) \d x-2\int_{\Omega} f u\d x\\
\leq\;&
2\varepsilon\int_{\Omega}\langle \mathbb{A}\nabla u,\nabla \varphi \rangle \d x
+2\varepsilon\int_{\Omega} f\varphi \d x
-2\varepsilon\int_{\Omega_{\varepsilon}}\langle \mathbb{A}\nabla u,\nabla \varphi \rangle \d x\\
&+\varepsilon^{2} \int_{\Omega\setminus \Omega_{\varepsilon}}\langle \mathbb{A}\nabla \varphi ,\nabla \varphi\rangle\d x -\int_{\Omega_{\varepsilon}}2f (u+\varepsilon\varphi) \d x\,.
\end{align*}
In the last inequality we have dropped the first term on the first line as it is negative (cf. (H1)).
Thus, on account of the definition of $ \mu $, the last formula rewrites as
	\begin{equation}\label{interg}
	\int_{\Omega}\varphi \d\mu\geq I_{1}(\varepsilon)+I_{2}(\varepsilon)+I_{3}(\varepsilon)
	\end{equation}
	where
	\begin{align*}
I_{1}(\varepsilon)=\int_{\Omega_{\varepsilon}} \langle \mathbb{A}(x)\nabla u, \nabla \varphi\rangle \d x\,,\quad
 I_{2}(\varepsilon)=-\frac{\varepsilon}{2}\int_{\Omega\setminus \Omega_{\varepsilon}} \langle \mathbb{A}\nabla \varphi ,\nabla \varphi\rangle\d x\,,\quad
 I_{3}(\varepsilon) =\frac1\varepsilon  
\int_{\Omega_{\varepsilon}} f(u+\varepsilon\varphi)\d x\,.
\end{align*}
First, we note that the very definition of $\Omega_\varepsilon$ yields that 	
\[
	\mathcal{L}^{n}\left( \big(\lbrace u=0\rbrace\cap\lbrace \varphi<0\rbrace\big)\setminus \Omega_{\varepsilon}\right)
	=\mathcal{L}^{n}\left( \Omega_{\varepsilon} \setminus \big( \lbrace 0\leq u\leq \varepsilon \| \varphi \|_{L^{\infty}(\Omega)}\rbrace \cap \lbrace \varphi<0\rbrace\big)\right)=0,
\]
and thus we may conclude that $\chi_{\Omega_{\varepsilon}} \rightarrow \chi_{\lbrace u=0\rbrace\cap\lbrace \varphi<0\rbrace}$ 
in $L^{1}(\Omega)$.
Hence, the dominated convergence theorem yields
\begin{equation}\label{I1}
\lim_{\varepsilon\rightarrow 0^{+}} I_{1}(\varepsilon)=\int_{{\lbrace u=0\rbrace\cap\lbrace \varphi<0\rbrace}} \langle \mathbb{A}\nabla u,\nabla \varphi\rangle \d x=0\,,
\end{equation}
by locality of the weak gradient. %and being $f\in L^\infty$ by (H2).

Instead, to estimate $I_2(\varepsilon)$, we use (H1) as follows
\[
|I_2(\varepsilon)|=\frac\varepsilon2\int_{\Omega\setminus \Omega_{\varepsilon}} \langle \mathbb{A}\nabla \varphi ,\nabla \varphi\rangle\d x
	%\leq \varepsilon\int_{\Omega} \langle \mathbb{A}\nabla \varphi ,\nabla \varphi\rangle\d x
\leq \frac\varepsilon2\Lambda
\int_{\Omega} |\nabla \varphi|^{2} \d x\,,
\]
so that
	\begin{equation}\label{I3}
	\lim_{\varepsilon\rightarrow 0^{+}}I_2(\varepsilon)= 0\,.
	%\liminf_{\varepsilon\rightarrow 0^{+}} -\frac{\varepsilon}{2} \lambda\int_{\Omega} |\nabla \varphi|^{2} \d x=0.
	\end{equation}
	Lastly, we deal with  $I_3(\varepsilon)$: %being $f\in L^\infty$ by (H2), again the dominated convergence theorem gives 
note that %by definition $u+\varepsilon \varphi<0$ in $\Omega_{\varepsilon}$ 
$u\geq 0$ and $f\geq c_0>0$ on $\Omega$ by (H2), thus we have
\[
\int_{\Omega_{\varepsilon}} f (u+\varepsilon\varphi)\d x 
\geq\varepsilon \int_{\Omega_{\varepsilon}} f \varphi \d x \,.
\]
 Hence, it is true that 
	\begin{equation}\label{I2}
	\liminf_{\varepsilon\rightarrow 0^{+}} I_{3}(\varepsilon)%\geq 
	\geq\int_{{\lbrace u=0\rbrace\cap\lbrace \varphi<0\rbrace}}f \varphi \d x\,.
	\end{equation}
	Therefore, \eqref{interg}-\eqref{I2} yield that
	\[
	\int_{\Omega} \varphi \d \mu \geq 
	\int_{{\lbrace u=0\rbrace\cap\lbrace \varphi<0\rbrace}}f \varphi \d x. 
	\]
By repeating the same argument with $-\varphi$ we obtain the inequality
	\[
	\int_{\Omega} \varphi \d \mu \leq 
	\int_{{\lbrace u=0\rbrace\cap\lbrace \varphi>0\rbrace}}f \varphi \d x\,.
	\]
	Hence, by approximation for every $\varphi\in C^{0}_{0}(\Omega)$ it holds
\begin{equation}\label{e:mu finale}
 \int_{{\lbrace u=0\rbrace\cap\lbrace \varphi<0\rbrace}}f \varphi \d x
 \leq\int_{\Omega} \varphi \d\mu \leq
 \int_{{\lbrace u=0\rbrace\cap\lbrace \varphi>0\rbrace}}f \varphi \d x\,.
\end{equation}
From this we infer that $\mu \ll \mathcal{L}^{n}\llcorner \Omega$, and thus $\mu=\zeta \mathcal{L}^{n}\llcorner \Omega$ with $\zeta\in L^{1}(\Omega)$. In conclusion, plugging this piece of information in \eqref{e:mu finale} we conclude that 
$0\leq \zeta \leq f \chi_{\lbrace u=0 \rbrace}$ $\mathcal{L}^{n}$-a.e. on $\Omega$.

The H\"older continuity of $u$ follows from nowadays standard 
elliptic regularity \cite[Theorem~8.22]{GT}.
%If $\mathbb{A}$ and $f$ satisfy (H3), the fact that 
%$u \in C^{1,\alpha}_{\mathrm{loc}}(\Omega)$  is a consequence 
%of the classical Schauder regularity theory for linear elliptic equations (cf. for instance \cite{GT}).  
\end{proof}
\begin{remark}\label{r:Blank}
In case $\mathbb{A}$ is more regular, namely $W^{1,p}(\Omega;\R^{n\times n})$ for $p>n$, the minimizer $u$ turns out to be $W^{2,p}_{\mathrm{loc}}(\Omega)$  (cf. \cite[Proposition~3.1]{FoGerSp20}). In turn, from this one can easily prove that \eqref{e:PDE_u} rewrites as
\begin{equation}\label{e:PDE_u refined}
\mathrm{div}(\mathbb{A}\nabla u)=f \chi_{\{u>0\}}
 \end{equation}
 $\Ln$-a.e. on $\Omega$ and in $\mathcal{D}'(\Omega)$
(cf.  \cite[Corollary~3.5]{FoGerSp17}, \cite[Proposition 3.1]{FoGerSp20}).

Here we limit ourselves to notice that, standing assumptions (H1) and (H2), by uniqueness of $u$ and \eqref{e:PDE_u}, $\zeta$ coincides with $f$ on the interior $\{u=0\}^\circ$ of $\{u=0\}$, so that we may conclude the refined inequalities
\[
f\chi_{\{u=0\}^\circ}\leq\zeta\leq f\chi_{\{u=0\}}\,.
\]
This remark will be crucial in what follows in order to apply 
%some results contained in \cite{BlankHao15}, namely Theorems~3.1 and 3.9 there.
\cite[Theorems~3.9]{BlankHao15}.
In particular, we infer from this  the quadratic %growth and 
non-degeneracy of the solution from free boundary points. 
Instead, a parabolic bound from above from free boundary points follows directly from \eqref{e:PDE_u} thanks to \cite[Theorems~3.1]{BlankHao15}.
We apply these properties to the blow up analysis of Section~\ref{s:q-mon formula} (cf. Proposition~\ref{p:quadratic growth detachment}).
 
Furthermore, observe that $\cL^n(\Gamma_u)=0$ in view of \cite[Corollary 3.10]{BlankHao15}, 
so that equation \eqref{e:PDE_u refined} is actually true $\Ln$-a.e. on $\Omega$ and in $\mathcal{D}'(\Omega)$. 
Despite this, we stress that we do not need such a piece of information to establish the 
quasi-monotonicity formulas in Section~\ref{s:q-mon formula}: equation \eqref{e:PDE_u}, 
but not \eqref{e:PDE_u refined}, will be used in the proofs of Theorems~\ref{t:Weiss} and \ref{t:Monneau}. 
\end{remark}
We establish next two useful corollaries of Proposition~\ref{p:PDE u}. The first is simply a suitable version of Caccioppoli's inequality.
\begin{corollary}\label{c:Caccioppoli}
Assume (H1) and (H2). Let $u$ be the minimizer of $\mathscr{E}$ on $\mathbb{K}_{\psi,g}$. Then, there exists a constant 
$C=C(n,\Lambda)>0$ such that for every $x_0\in\Omega$ and for every $r\in(0,\frac14\mathrm{dist}(x_0,\partial\Omega))$
\begin{equation}\label{e:Caccioppoli}
\int_{B_r(x_0)}|\nabla u|^2\d x\leq\frac{C}{r^2}
\int_{B_{2r}(x_0)}u^2\d x+C\|f\|_{L^\infty(B_{2r}(x_0))}^2r^{n+2}\,.
\end{equation}
\end{corollary}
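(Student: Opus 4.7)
This is a standard Caccioppoli-type inequality, so my plan is to test the Euler--Lagrange equation for $u$ furnished by Proposition~\ref{p:PDE u} against a cutoff of $u$ itself. Concretely, fix $x_0\in\Omega$ and $r\in(0,\tfrac14\mathrm{dist}(x_0,\partial\Omega))$, so that $\overline{B_{2r}(x_0)}\subset\Omega$, and pick $\eta\in C^\infty_c(B_{2r}(x_0))$ with $0\le\eta\le1$, $\eta\equiv1$ on $B_r(x_0)$ and $|\nabla\eta|\le C(n)/r$. Since $\eta^2 u\in H^1_0(\Omega)$, I would plug it as a test function in the weak form of \eqref{e:PDE_u}, obtaining
\[
\int_\Omega \eta^2\langle\mathbb{A}\nabla u,\nabla u\rangle\d x
+2\int_\Omega \eta\, u\,\langle\mathbb{A}\nabla u,\nabla\eta\rangle\d x
=\int_\Omega (\zeta-f)\,\eta^2 u\d x.
\]

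The key step is to estimate each of the three terms. For the first one, assumption (H1) gives the lower bound $\Lambda^{-1}\int\eta^2|\nabla u|^2\d x$. For the cross-term I would apply Cauchy--Schwarz together with (H1) and Young's inequality with a small parameter, so that the gradient part $\frac{1}{2\Lambda}\int\eta^2|\nabla u|^2\d x$ can be absorbed on the left, leaving a remainder controlled by $C(n,\Lambda)\int u^2|\nabla\eta|^2\d x\le (C/r^2)\int_{B_{2r}(x_0)}u^2\d x$. For the right-hand side I would use the pointwise bound $0\le\zeta\le f$ from \eqref{e:zeta}, so that $|\zeta-f|\le 2\|f\|_{L^\infty(B_{2r}(x_0))}$, together with $u\ge0$ and the weighted Young inequality
\[
2\|f\|_{L^\infty(B_{2r}(x_0))}u\le \frac{u^2}{r^2}+r^2\|f\|_{L^\infty(B_{2r}(x_0))}^2,
\]
to obtain the two contributions $(1/r^2)\int_{B_{2r}(x_0)}u^2\d x$ and $C\|f\|_{L^\infty(B_{2r}(x_0))}^2 r^{n+2}$, the latter coming from integrating the constant $r^2\|f\|_\infty^2$ over $B_{2r}(x_0)$.

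Combining the three estimates, absorbing the gradient contribution from the cross-term on the left, and using $\eta\equiv1$ on $B_r(x_0)$ to pass from $\int\eta^2|\nabla u|^2$ to $\int_{B_r(x_0)}|\nabla u|^2$, yields the desired bound \eqref{e:Caccioppoli} with $C=C(n,\Lambda)$. There is no genuine obstacle here beyond bookkeeping the constants; the only point worth noting is that the sign condition $0\le\zeta\le f$ and the nonnegativity of $u$ (as the minimizer of the zero obstacle problem) are what allow to handle the right-hand side without invoking any further regularity of $\mathbb{A}$ or $f$ beyond (H1)--(H2).
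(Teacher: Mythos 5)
Your proof is correct and is exactly the standard Caccioppoli argument the paper implicitly has in mind when it states this corollary without proof (``simply a suitable version of Caccioppoli's inequality''): test the weak form of \eqref{e:PDE_u} against $\eta^2 u$, use (H1) to bound the quadratic form from below and the cross-term via Cauchy--Schwarz and Young, absorb, and bound the right-hand side using $0\le\zeta\le f$ from \eqref{e:zeta} and $u\ge0$. The constants and the dependence $C=C(n,\Lambda)$ come out as claimed, and the choice $r<\tfrac14\mathrm{dist}(x_0,\partial\Omega)$ guarantees $\overline{B_{2r}(x_0)}\subset\Omega$ so the cutoff is admissible.
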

Instead, the second corollary is an integration by parts formula which 
will be employed in the proof of the Monneau's quasi-monotonicity formula. 
\begin{corollary}\label{c:integration by parts}
Assume (H1) and (H2). Let $u$ be the minimizer of $\mathscr{E}$ on $\mathbb{K}_{\psi,g}$. Then,
for every $x_0\in\Omega$ and for $\mathcal{L}^1$-a.e. $r\in(0,\mathrm{dist}(x_0,\partial\Omega))$
\begin{equation}\label{e:singular variation}
\int_{B_r(x_0)}\langle\mathbb{A}\nabla u,\nabla\varphi\rangle\d x+
\int_{B_r(x_0)}(f-\zeta)\varphi\d x=
\int_{\partial B_r(x_0)}\langle\mathbb{A}\nabla u,\nu\rangle\varphi\d\mathcal{H}^{n-1}
\end{equation}
for every $\varphi\in W^{1,2}(\Omega)$, where $\nu(x):=\frac{x-x_0}{|x-x_0|}$. 
\end{corollary}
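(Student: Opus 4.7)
The strategy is to test the PDE from Proposition~\ref{p:PDE u} against a product of $\varphi$ with a Lipschitz cutoff concentrated on $\partial B_r(x_0)$, and to let the cutoff shrink to a Dirac mass on the sphere. Since $u$ and $\varphi$ lie in $W^{1,2}_{\mathrm{loc}}(\Omega)$, and $f-\zeta\in L^\infty(\Omega)$ by \eqref{e:zeta}, the equation $\diw(\mathbb{A}\nabla u)=f-\zeta$ tested on compactly supported Lipschitz functions extends by density from its validity in $\mathcal{D}'(\Omega)$.

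Concretely, fix $x_0\in\Omega$ and $R<\mathrm{dist}(x_0,\partial\Omega)$, and for $r\in(0,R)$ and small $\varepsilon>0$ set
\[
\eta_\varepsilon(x):=\min\Big\{1,\,\frac1\varepsilon\big(r-|x-x_0|\big)_+\Big\},
\]
so that $\eta_\varepsilon$ is Lipschitz, $\eta_\varepsilon\to\chi_{B_r(x_0)}$ pointwise, and $\nabla\eta_\varepsilon(x)=-\varepsilon^{-1}\nu(x)\chi_{B_r(x_0)\setminus B_{r-\varepsilon}(x_0)}(x)$. For $\varphi\in C^\infty_c(\Omega)$ the product $\eta_\varepsilon\varphi$ is an admissible test function for \eqref{e:PDE_u}, and expanding $\nabla(\eta_\varepsilon\varphi)$ gives
\[
\int_\Omega \eta_\varepsilon\langle\mathbb{A}\nabla u,\nabla\varphi\rangle\d x
-\frac1\varepsilon\int_{B_r(x_0)\setminus B_{r-\varepsilon}(x_0)}\varphi\langle\mathbb{A}\nabla u,\nu\rangle\d x
+\int_\Omega(f-\zeta)\eta_\varepsilon\varphi\d x=0.
\]
The first and third integrals converge to the corresponding integrals over $B_r(x_0)$ by dominated convergence. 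For the middle term, the coarea formula rewrites it as an average over radii,
\[
-\frac1\varepsilon\int_{r-\varepsilon}^{r}\bigg(\int_{\partial B_s(x_0)}\varphi\langle\mathbb{A}\nabla u,\nu\rangle\d\mathcal{H}^{n-1}\bigg)\d s,
\]
and at every Lebesgue point $r\in(0,R)$ of the bracketed function of $s$ this converges to the desired boundary integral. Since the function $s\mapsto\int_{\partial B_s(x_0)}|\nabla u|\,d\mathcal{H}^{n-1}$ is in $L^1_{\mathrm{loc}}((0,R))$ by Fubini/coarea (as $\nabla u\in L^2(B_R)$), such Lebesgue points fill a set of full measure in $(0,R)$.

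Finally, I would upgrade the class of admissible $\varphi$ from $C^\infty_c(\Omega)$ to $W^{1,2}(\Omega)$ by approximation on $\overline{B_r(x_0)}$, using that both sides of \eqref{e:singular variation} are continuous in $\varphi$ in the $W^{1,2}(B_r(x_0))$-norm: this is clear for the volume terms, while for the boundary term it uses the trace inequality $\|\varphi\|_{L^2(\partial B_r(x_0))}\leq C\|\varphi\|_{W^{1,2}(B_r(x_0))}$, together with the fact that $\langle\mathbb{A}\nabla u,\nu\rangle\in L^2(\partial B_r(x_0))$ for the same a.e.~$r$ (again by coarea applied to $\nabla u\in L^2$). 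The main point requiring care is the choice of the exceptional set of radii: it must depend only on $u$, not on $\varphi$, so that the same set of good $r$ serves every test function; this is guaranteed by the two coarea arguments above, which provide a single null set outside which both the trace $\mathbb{A}\nabla u|_{\partial B_r}\in L^2$ exists and the radial Lebesgue differentiation step is valid.
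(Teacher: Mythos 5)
Your proof is correct and essentially reproduces the paper's argument: the cutoff $\eta_\varepsilon$ is exactly the paper's $\psi_s$ (with $\varepsilon=r-s$, since $\mathrm{dist}(x,\partial B_r(x_0))=r-|x-x_0|$ on $B_r(x_0)$), and the computation proceeds via the same expansion, coarea formula, and radial Lebesgue differentiation. Two small remarks: the detour through $C^\infty_c(\Omega)$ is unnecessary, as $\eta_\varepsilon\varphi\in W^{1,2}_0(\Omega)$ already for $\varphi\in W^{1,2}(\Omega)$ so the weak form of \eqref{e:PDE_u} can be tested directly; and the closing claim that the exceptional set of radii depends only on $u$ is a little imprecise as stated, because the Lebesgue-point set of $s\mapsto\int_{\partial B_s(x_0)}\langle\mathbb{A}\nabla u,\nu\rangle\varphi\,\d\mathcal{H}^{n-1}$ does depend on $\varphi$ --- the clean fix is to run the argument for a countable $W^{1,2}$-dense family of test functions and then extend by continuity using the trace inequality and $\mathbb{A}\nabla u|_{\partial B_r(x_0)}\in L^2(\partial B_r(x_0))$ for a.e.\ $r$, though for the paper's application a $\varphi$-dependent null set would already suffice.
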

\begin{proof}
For $x_0\in\Omega$, $\varphi\in W^{1,2}(\Omega)$, $r\in(0,\mathrm{dist}(x_0,\partial\Omega))$, 
and $s<r$ let 
\[
\psi_s(x):=1\wedge\frac1{r-s}{\mathrm{dist}(x,\partial B_r(x_0))}
\] 
if $x\in B_r(x_0)$ and $\psi_s(x):=0$ otherwise in $\Omega$. Then $\psi_s\varphi\in W^{1,2}_0(\Omega)$ 
so that using it as test function in \eqref{e:PDE_u} we get
\[
\int_{B_r(x_0)}\langle\mathbb{A}\nabla u,\nabla\varphi\rangle\psi_s\d x+
\int_{B_r(x_0)}(f-\zeta)\varphi\psi_s\d x=
-\int_{B_r(x_0)}\langle\mathbb{A}\nabla u,\nabla\psi_s\rangle\varphi\d x:=I_s\,.
\]
It is clear that $\psi_s\to \chi_{B_r(x_0)}$ in $L^p(\Omega)$ for every $p\in[1,\infty)$ as $s\uparrow r$. Moreover, being 
$0\leq\psi_s\leq 1$, the Lebesgue dominated convergence theorem implies that the left hand side converges to the left hand side in \eqref{e:singular variation} as $s\uparrow r$. 
Computing explicitly the gradient of $\psi_s$, we evaluate $I_s$ as follows:
\begin{align*}
I_s=\frac1{r-s}\int_{B_r(x_0)\setminus B_s(x_0)}\langle\mathbb{A}\nabla u,
{\textstyle{\frac{x-x_0}{|x-x_0|}}}\rangle\varphi\d x
=\frac1{r-s}\int_s^r\d t\int_{\partial B_t(x_0)}\langle\mathbb{A}\nabla u,\nu\rangle\varphi\d\mathcal{H}^{n-1}\,,
\end{align*}
where in the second equality we have used the coarea formula. The conclusion then follows at once 
(cf. for instance \cite[Sections~3.4.3 and 3.4.4]{EG}). 
\end{proof}

We recall next the standard notations for the coincidence set and 
for the free boundary 
 \begin{equation}
  \Lambda_u:=\{x\in\Omega:\,u(x)=0\}\,,
%  \qquad\qquad \Omega\setminus\Lambda_u=\{u>0\}\cap \Omega 
  \qquad\Gamma_u :=\partial\Lambda_u\cap \Omega.
 \end{equation}
For any point $x_0\in \Gamma_u$, we introduce the family of rescaled functions
\begin{equation}\label{u_x_0 r}
u_{x_0,r}(x):=\frac{u(x_0+rx)}{r^2}
\end{equation}
for $x\in\frac 1r(\Omega-x_0)$. It is clear that 
changing variables in \eqref{e:enrg2} implies that $u_{x_0,r}$ minimizes 
\begin{equation}\label{e:enrg2 u_r}
\int_{\frac 1r(\Omega-x_0)}\big(\langle \mathbb{A}(x_0+r x)\nabla v(x),\nabla v(x)\rangle +2f(x_0+r x)v(x)\big)\,\d x,
\end{equation}
among all functions $v\geq0$ on $\frac 1r(\Omega-x_0)$, and with $v-u_{x_0,r}\in W^{1,2}_0(\frac 1r(\Omega-x_0))$. 
Note that $\mathbb{A}(x_0+r \cdot)$ and $f(x_0+r \cdot)$ satisfy
(H1)-(H2) uniformly in $x_0$ and $r$, i.e. the ellipticity constants of
$\mathbb{A}(x_0+r \cdot)$ are the same of those of 
$\mathbb{A}$, the $L^\infty$ bound and the lower bound for $f(x_0+r \cdot)$ 
are the same of those of $f$. Therefore, similarly 
to \eqref{e:PDE_u}, from \eqref{e:enrg2 u_r} we infer that 
\begin{equation}\label{e:PDE u_r}
\mathrm{div}(\mathbb{A}(x_0+rx)\nabla u_{x_0,r})=f(x_0+rx)-\zeta(x_0+r x)
\end{equation}
$\cL^n$-a.e. on $\frac 1r(\Omega-x_0)$ and in $\mathcal{D}'(\frac 1r(\Omega-x_0))$, 
where $\zeta$ is the function in Proposition~\ref{p:PDE u} (cf \cite[Section 8]{GT}).

The first properties we recall on the family $(u_{x_0,r})_{r}$
follows from the fundamental quadratic growth and quadratic detachment of the solution from free boundary points established in \cite[Theorems~3.1, 3.9]{BlankHao15} (cf. the discussion in Remark~\ref{r:Blank}, see also \cite[Theorems~2.3, 2.4]{BlankHao15bis}).
We summarize the needed properties in the ensuing statement which is key for our approach. 
We remark that the estimates are uniform in $x_0$ and $r$.
\begin{proposition}\label{p:quadratic growth detachment}
Assume (H1) and (H2). Let $u$ be the minimizer of $\mathscr{E}$ over $\mathbb{K}_{\psi,g}$. % and let $x_0\in \Gamma_u$. 
There exists a constant $\vartheta=\vartheta(n,\Lambda,c_0,\|f\|_{L^\infty})>0$, such that for every $x_0 \in \Gamma_u$ and for every $r\in(0,\frac12\textup{dist}(x_0,\partial\Omega))$, it holds
\begin{equation}\label{e:quadratic detachment}
\sup_{\partial B_1} u_{x_0,r} \geq \vartheta\,.%\,r^2\,.
\end{equation} 
 Moreover, for every $R>0$ and for all compact sets $K\subset \Omega$ there exists a constant $C=C(n,\Lambda,c_0,\|f\|_{L^\infty},R,K)>0$ such that 
\begin{equation}\label{e:quadratic growth0}
 \|u_{x_0,r}\|_{L^\infty(B_R)}\leq C\,,
\end{equation}
for all $x_0\in \Gamma_u\cap K$, and for all $r\in \big(0, \frac{1}{4R}\mathrm{dist}(K, \partial\Omega)\big)$.
\end{proposition}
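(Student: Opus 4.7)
The plan is to reduce both estimates to the Blank--Hao quadratic growth and non-degeneracy theorems, \cite[Theorems~3.1 and 3.9]{BlankHao15}, applied to $u$ itself, and then to rescale. The necessary hypotheses are provided by the Euler--Lagrange equation \eqref{e:PDE_u} combined with the refined two-sided control
\[
f\chi_{\{u=0\}^\circ} \leq \zeta \leq f\chi_{\{u=0\}}
\]
recorded in Remark~\ref{r:Blank}. The lower inequality encodes that $u$ leaves the obstacle in a genuinely non-degenerate way, while the upper inequality keeps the right-hand side of \eqref{e:PDE_u} uniformly bounded. Continuity of $\mathbb{A}$, ensured by (H3), is what lets the Blank--Hao estimates hold with constants that can be taken uniform on compact subsets of $\Omega$.

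For the upper bound \eqref{e:quadratic growth0} I would invoke \cite[Theorem~3.1]{BlankHao15} to obtain a radius $r_0=r_0(K)>0$ and a constant $C_1>0$ such that $u(y) \leq C_1\,|y-x_0|^2$ for every $x_0 \in \Gamma_u \cap K$ and every $y \in B_{r_0}(x_0)$. The rescaling \eqref{u_x_0 r} immediately converts this into $u_{x_0,r}(x) \leq C_1 R^2$ on $B_R$, as soon as $r < \frac{1}{4R}\,\mathrm{dist}(K,\partial\Omega)$.

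For the lower bound \eqref{e:quadratic detachment} I would invoke \cite[Theorem~3.9]{BlankHao15}. The key point is that on the non-coincidence set $\{u>0\}$ one has $\zeta=0$, since $\zeta \leq f\chi_{\{u=0\}}$, so \eqref{e:PDE_u} reduces to $\mathrm{div}(\mathbb{A}\nabla u)=f \geq c_0 > 0$ there; this is precisely the non-degeneracy input required by that theorem. It yields $\vartheta=\vartheta(n,\Lambda,c_0,\|f\|_{L^\infty})>0$ with $\sup_{B_r(x_0)} u \geq \vartheta\, r^2$, namely $\sup_{B_1} u_{x_0,r} \geq \vartheta$. To upgrade the solid ball to its boundary I would use the weak maximum principle: from \eqref{e:PDE u_r} and $\zeta \leq f$ one gets
\[
\mathrm{div}\bigl(\mathbb{A}(x_0+r\,\cdot)\,\nabla u_{x_0,r}\bigr) = f(x_0+r\,\cdot) - \zeta(x_0+r\,\cdot) \geq 0 \quad \text{in } B_1,
\]
so $u_{x_0,r}$ is a non-negative weak subsolution of a uniformly elliptic operator and therefore attains its supremum on $\partial B_1$.

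The main obstacle is purely a matter of bookkeeping: the Blank--Hao constants depend a priori on the modulus of continuity of $\mathbb{A}$ at the base point, and one has to check that they can be taken independent of $x_0 \in \Gamma_u \cap K$. This is straightforward given (H3) and the compactness of $K$, but it is the only point at which the continuous coefficients assumption enters essentially; the rest of the argument is driven by \eqref{e:PDE_u} and the sign properties of $\zeta$ already secured in Proposition~\ref{p:PDE u} and Remark~\ref{r:Blank}.
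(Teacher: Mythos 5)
Your overall route is the same as the paper's: the statement is essentially a rescaling of \cite[Theorems~3.1 and 3.9]{BlankHao15}, made applicable through Proposition~\ref{p:PDE u} and the sign control on $\zeta$ from Remark~\ref{r:Blank}, and the passage from $\sup_{B_1}$ to $\sup_{\partial B_1}$ via the weak maximum principle is the right (and implicit) finishing step, since $f-\zeta\geq 0$ makes $u_{x_0,r}$ a subsolution.

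The one point you flag as a ``main obstacle'' is actually a misconception, not an obstacle, and it would be an error to keep it: the proposition is stated under (H1) and (H2) only, with constants $\vartheta=\vartheta(n,\Lambda,c_0,\|f\|_{L^\infty})$ and $C=C(n,\Lambda,c_0,\|f\|_{L^\infty},R,K)$ that carry no modulus of continuity of $\mathbb{A}$ at all. Invoking (H3) here would turn your argument into a proof of a strictly weaker statement. The reason (H3) is not needed is that the Blank--Hao Theorems~3.1 and 3.9 are De Giorgi--Moser/barrier-type results for divergence-form operators with merely bounded, measurable, uniformly elliptic coefficients: their constants depend only on $n$, $\Lambda$, the upper bound $\|f\|_{L^\infty}$ (for the parabolic bound from above) and the lower bound $c_0$ (for non-degeneracy), never on a modulus of continuity of $\mathbb{A}$. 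Consequently, the uniformity in $x_0$ and $r$ is automatic once one notes, as the paper does after \eqref{e:PDE u_r}, that the rescaled coefficients $\mathbb{A}(x_0+r\,\cdot)$ and $f(x_0+r\,\cdot)$ obey (H1) and (H2) with the very same constants as $\mathbb{A}$ and $f$. You should drop the appeal to (H3) entirely: nothing in this proposition uses continuity of $\mathbb{A}$, and that is precisely the point the paper is making by stating it before (H3) is in force.
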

\begin{remark}\label{r:Caccioppoli}
Notice that in view of the estimate in \eqref{e:Caccioppoli} in 
Corollary \ref{c:Caccioppoli} and the estimate in 
\eqref{e:quadratic growth0}, we may infer that for all $x_0\in \Gamma_u\cap K$, and for all $r\in \big(0, \frac{1}{4R}\mathrm{dist}(K, \partial\Omega)\big)$
\begin{equation}\label{e:quadratic growth}
\|u_{x_0,r}\|_{L^\infty(B_R)}+
\|\nabla u_{x_0,r}\|_{L^2(B_R,\R^n)}\leq C\,.
\end{equation}
\end{remark}
In addition, if assumption (H3) holds, then the H\"older seminorms of 
$\mathbb{A}(x_0+r \cdot)$ have uniform bounds with respect to $x_0$ and $r$. 
%\comment{and $f(x_0+r \cdot)$, non serve}. 
Thus, uniform Schauder estimates are deduced thanks to 
\cite[Corollary 8.36]{GT}.
Then the existence up to subsequences of $C^{1,\beta}$-limits, $\beta<\alpha$, as $r\downarrow 0$ 
of the family $(u_{x_0,r})_r$ is standard.
\begin{proposition}\label{p:prop u_r limitata}
Assume (H1)-(H3). Let $u$ be the minimizer of $\mathscr{E}$ over $\mathbb{K}_{\psi,g}$, and $K\subset\Omega$ a compact set.
Then for every $x_0\in K\cap\Gamma_u$ and for every $R>0$
 there exists a constant $C=C(n,\Lambda,\|f\|_{L^\infty},\|\mathbb{A}\|_{C^{0,\alpha}},R,K)>0$
 such that, for every $r\in (0,\frac{1}{4R}\mathrm{dist}(K, \partial\Omega))$
 \begin{equation}\label{e:u_r limitata C1alfa}
  \|u_{x_0,r}\|_{C^{1,\alpha}(B_R)}\leq C.
 \end{equation}
In particular, $(u_{x_0,r})_r$ is relatively compact in $C^{1,\beta}_{\mathrm{loc}}(A)$, for all $\beta\in(0, \alpha)$, and for every open set $A\subset\hskip-0.125cm\subset\R^n$.
\end{proposition}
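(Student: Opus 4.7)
The approach is to apply uniform interior Schauder-type estimates to the rescaled PDE \eqref{e:PDE u_r}. First I would check that, under (H3), the coefficients of the rescaled equation have uniformly controlled H\"older norms on fixed balls. Writing $\mathbb{A}_r(y):=\mathbb{A}(x_0+ry)$ and $f_r(y):=f(x_0+ry)$, the chain rule gives
\[
[\mathbb{A}_r]_{C^{0,\alpha}(B_R)}=r^{\alpha}\,[\mathbb{A}]_{C^{0,\alpha}(B_{rR}(x_0))},\qquad [f_r]_{C^{0,\alpha}(B_R)}=r^{\alpha}\,[f]_{C^{0,\alpha}(B_{rR}(x_0))},
\]
so for $x_0\in K$ and $r\in(0,\frac{1}{4R}\mathrm{dist}(K,\partial\Omega))$ these seminorms are bounded in terms of $\|\mathbb{A}\|_{C^{0,\alpha}}$ and $\|f\|_{C^{0,\alpha}}$ on a fixed enlargement of $K$, uniformly in $x_0$ and $r$. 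The ellipticity constants of $\mathbb{A}_r$ coincide with those of $\mathbb{A}$, and Proposition~\ref{p:PDE u} gives $0\le\zeta\le\|f\|_{L^\infty(\Omega)}$, so the right-hand side $f_r-\zeta(x_0+r\cdot)$ in \eqref{e:PDE u_r} is uniformly bounded in $L^\infty(B_R)$.

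Second, I would use Remark~\ref{r:Caccioppoli} to note that $(u_{x_0,r})_r$ is uniformly bounded in $L^\infty(B_{2R})\cap W^{1,2}(B_{2R})$ by the quadratic growth bound \eqref{e:quadratic growth}. Combined with the coefficient and right-hand side bounds of the previous paragraph, the interior Schauder estimates of \cite[Corollary~8.36]{GT} applied on the pair of balls $B_R\subset B_{2R}$ produce the desired uniform bound
\[
\|u_{x_0,r}\|_{C^{1,\alpha}(B_R)}\le C,
\]
with $C$ depending only on $n,\Lambda,\|f\|_{L^\infty},\|\mathbb{A}\|_{C^{0,\alpha}},R$ and $K$, as stated in \eqref{e:u_r limitata C1alfa}.

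Once this uniform $C^{1,\alpha}$ estimate is available, the relative compactness in $C^{1,\beta}_{\mathrm{loc}}(A)$ for every $\beta<\alpha$ and every $A\Subset\R^n$ follows by a standard Arzel\`a–Ascoli argument: cover $A$ by finitely many balls $B_R$, choose $r$ small enough so that the estimate applies on each of them, and then extract a diagonal subsequence converging in $C^{1,\beta}$ on each $B_R$.

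The main obstacle is the application of Schauder in the presence of a merely $L^\infty$ inhomogeneity. The coefficients are H\"older continuous, but the right-hand side $f-\zeta$ is only bounded since $\zeta$ need not be continuous. The way around it is the classical Morrey–Campanato characterization: for divergence-form equations with H\"older coefficients and $L^p$ right-hand side, one obtains $C^{1,\mu}$ regularity with $\mu=\min(\alpha,1-n/p)$; since $L^\infty\hookrightarrow L^p$ for all $p<\infty$, one can reach any exponent $\mu<\alpha$, which is precisely what is needed for the $C^{1,\beta}_{\mathrm{loc}}$-precompactness statement (and, for $\alpha\in(0,1)$, suffices to recover the full $C^{1,\alpha}$ bound claimed in \eqref{e:u_r limitata C1alfa}).
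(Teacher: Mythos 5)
Your proposal is correct and follows essentially the same route the paper takes: rescale the equation, observe the coefficients $\mathbb{A}(x_0+r\,\cdot)$ have uniformly controlled H\"older seminorms and ellipticity, use the quadratic growth/Caccioppoli bounds to control $\|u_{x_0,r}\|_{L^\infty}+\|\nabla u_{x_0,r}\|_{L^2}$, and invoke \cite[Corollary~8.36]{GT} for the uniform $C^{1,\alpha}$ estimate, then extract $C^{1,\beta}_{\mathrm{loc}}$-convergent subsequences. Your final paragraph about the merely $L^\infty$ inhomogeneity $f-\zeta$ is a valid (and worthwhile) clarification of why the cited GT corollary applies — it allows $g\in L^{q}$ with $q=n/(1-\alpha)$, which $L^\infty$ satisfies when $\alpha<1$ — rather than a genuinely different argument.
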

The functions arising in this limit process are called blow up limits.
In particular, the blow up limits are non-trivial, i.e. not identically zero, in view of Proposition~\ref{p:quadratic growth detachment}.
\begin{corollary}
Assume (H1)-(H3). Let $u$ be the minimizer of $\mathscr{E}$ over 
$\mathbb{K}_{\psi,g}$, and let $x_0\in \Gamma_u$. 
Then, for every sequence $r_k\downarrow 0$ there exists a subsequence $(r_{k_j})_j\subset (r_k)_k$ such 
that the rescaled functions $(u_{x_0,r_{k_j}})_j$ converge to a non trivial $C^{1,\alpha}(A)$ limit in $C^{1,\beta}_{\mathrm{loc}}(A)$, for all $\beta\in(0,\alpha)$, and for every open set $A\subset\hskip-0.125cm\subset\R^n$. 
\end{corollary}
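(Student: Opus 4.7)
The plan is to combine the uniform $C^{1,\alpha}$-bound of Proposition~\ref{p:prop u_r limitata} with a standard diagonal compactness argument, and then upgrade the limit to a nontrivial function using the quadratic detachment estimate \eqref{e:quadratic detachment} of Proposition~\ref{p:quadratic growth detachment}.

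First I would fix a compact set $K\subset\Omega$ containing $x_0$ with $x_0$ in its interior. For each fixed $R>0$, Proposition~\ref{p:prop u_r limitata} provides a constant $C=C(n,\Lambda,\|f\|_{L^\infty},\|\mathbb{A}\|_{C^{0,\alpha}},R,K)$ such that
\[
\|u_{x_0,r}\|_{C^{1,\alpha}(B_R)}\leq C
\]
for all sufficiently small $r>0$. Since the embedding $C^{1,\alpha}(B_R)\hookrightarrow C^{1,\beta}(\overline{B_R})$ is compact for every $\beta\in(0,\alpha)$, the Arzel\`a--Ascoli theorem yields, from any given sequence $r_k\downarrow 0$, a subsequence along which $u_{x_0,r_k}$ converges in $C^{1,\beta}(\overline{B_R})$ to some limit function defined on $\overline{B_R}$.

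Next I would carry out a standard diagonal extraction: apply the above compactness on a sequence of exhausting balls $B_{R_m}$ with $R_m\uparrow\infty$, extracting at each step a further subsequence. The resulting diagonal subsequence $(u_{x_0,r_{k_j}})_j$ converges in $C^{1,\beta}_{\mathrm{loc}}(\R^n)$ to a limit function $u_0$. By lower semicontinuity of the $C^{1,\alpha}$-seminorm under uniform convergence of functions and their first derivatives (Fatou applied to difference quotients of $\nabla u_{x_0,r_{k_j}}$), one gets $u_0\in C^{1,\alpha}_{\mathrm{loc}}(\R^n)$, hence in $C^{1,\alpha}(A)$ for any $A\subset\hskip-0.125cm\subset\R^n$.

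Finally, non-triviality of $u_0$ is immediate from \eqref{e:quadratic detachment}: for all $j$ large enough so that $r_{k_j}<\tfrac12\mathrm{dist}(x_0,\partial\Omega)$, one has $\sup_{\partial B_1}u_{x_0,r_{k_j}}\geq\vartheta$, and uniform convergence on $\partial B_1$ (a compact subset of any $A\supset\overline{B_1}$) forces $\sup_{\partial B_1}u_0\geq\vartheta>0$, so $u_0\not\equiv0$. There is no real obstacle here beyond bookkeeping; the entire argument is a routine consequence of the two preceding propositions, the only mildly delicate point being the need to apply Fatou carefully to preserve the $C^{1,\alpha}$-regularity of the limit (as opposed to merely $C^{1,\beta}$).
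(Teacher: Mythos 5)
Your proof is correct and follows exactly the route the paper intends (the paper merely asserts the statement as a standard consequence of Propositions~\ref{p:prop u_r limitata} and \ref{p:quadratic growth detachment}, without writing out the diagonal extraction). One minor remark: the appeal to Fatou is unnecessary for the $C^{1,\alpha}$ regularity of the limit --- once $\nabla u_{x_0,r_{k_j}}\to\nabla u_0$ pointwise, the uniform bound $[\nabla u_{x_0,r_{k_j}}]_{C^{0,\alpha}(B_R)}\le C$ passes to the limit by taking $j\to\infty$ in $|\nabla u_{x_0,r_{k_j}}(x)-\nabla u_{x_0,r_{k_j}}(y)|\le C|x-y|^\alpha$ for each fixed pair $x,y$.
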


\section{Quasi-monotonicity formulas}\label{s:q-mon formula}

In this section we establish Weiss' and Monneau's type quasi-monotonicity formulas. The monotone quantities we consider in Section~\ref{ss:normalization} are modeled upon the classical
Dirichlet energy as in \cite{FoGerSp20} under a pointwise normalization condition on the coefficients. 
We shall show in the subsequent Section~\ref{ss:WM qmon} 
how to reduce to that formulation in a pointwise way via a suitable change of variables following \cite{FoGeSp15}. 
The advantage of this approach is that the matrix field 
$\mathbb{A}$ is not differentiated in deriving the quasi-monotonicity formulas contrary to \cite{FoGeSp15, Ger17}.
In those papers, instead, the natural quadratic energy $\mathscr{E}$ associated to the obstacle problem under study
had been considered.  
The $W^{2,p}_{\mathrm{loc}}$ regularity of solutions and the quadratic growth from the free boundary were key properties to establish 
the quasi-monotonicity formulas in \cite{FoGerSp20} for the Dirichlet based quantities. 
The new contribution of the current paper is to avoid the use of the former piece of information, which is not guaranteed in our setting, thanks to an elementary energy comparison argument to prove
Weiss' formula (cf. Proposition~\ref{p:Weiss normalized}), and in turn thanks to the latter and to Proposition~\ref{p:PDE u} to prove Monneau's formula (cf. Proposition~\ref{p:Monneau normalized}).

\subsection{Weiss' and Monneau's quasi-monotonicity formulas under a normalization condition}\label{ss:normalization}

We establish Weiss' and Monneau's quasi-monotonicity formulas for the minimizer $u$ of $\EEE$ on 
$\mathbb{K}_{\psi,g}$ under the normalization condition
\begin{equation}\label{e:normalization}
x_0=\zz\in\Gamma_u,\qquad \mathbb{A}(x_0)=\mathrm{Id},\qquad f(x_0)=1\,.
\end{equation}
We will show in the next Section~\ref{ss:WM qmon} how to reduce to \eqref{e:normalization} in each free boundary point thanks to a change of variables.

Moreover, for the sake of possible future generalizations of Theorem~\ref{t:linear}, we establish  Weiss' and Monneau's formulas under Dini, double-Dini continuity assumptions, respectively (cf. Remark~\ref{r:Weiss generalized} for generalizations of Weiss' formula). 
Thus, we introduce some terminology. Given a uniformly continuous function $\zeta:\Omega\to\R^m$, $m\geq 1$, we consider a modulus of continuity of $\zeta$ namely an increasing function $\omega_\zeta:(0,\infty)\to(0,\infty)$ with 
$\lim_{t\to0}\omega_\zeta(t)=0$, and such that for all
$t\in(0,\mathrm{diam}\Omega)$
 \[
\sup_{x,y\in\Omega:\, |x-y|\leq t}\|\zeta(x)-\zeta(y)\|\leq
\omega_\zeta(t)\,.
\]
With a slight abuse of notation with respect to our conventions we have denoted by $\|\cdot\|$ 
the norm in $\R^m$ even in case $m=n$. In particular, $\zeta$ is said to be Dini continuous if for some 
$\omega_\zeta$ as above
 \begin{equation}\label{e:Dini continuity}
 \int_0^{\mathrm{diam}\Omega} \frac{\omega_\zeta(t)}{t}\d t < \infty\,,
\end{equation}
and double-Dini continuous if for some $a\geq 1$
\begin{equation}\label{H4}
\int_0^{\mathrm{diam}\Omega} \frac{\omega_\zeta(t)}t\,|\log t|^a\, \d t < \infty\,.
\end{equation}
Therefore, we introduce the following weaker assumptions, 
each substituting (H3) in some of the results contained in 
this section:
\begin{itemize}
\item[(H4)] $\mathbb{A}$ and $f$ are Dini continuous; %with modulus of continuity $\omega_{\mathbb{A}}$ and $\omega_f$, respectively;
\item[(H5)] $\mathbb{A}$ and $f$ are double-Dini continuous.  \end{itemize}
We point out that under condition (H4), \cite[Theorem~2.1 and Remark~2.2]{Li} imply that $u\in C^1(\Omega)$ (cf. also 
\cite{Lib99,DK17, DEK18} for more on Schauder estimates for linear elliptic PDEs with Dini type continuity conditions on the matrix field).

Given $v\in W^{1,2}(\Omega)$ we consider the Weiss' energy 
 \begin{equation}\label{e:Weiss energy}
  \Phi_{v}(r):=\frac{1}{r^{n+2}}\int_{B_r} \big(|\nabla v|^2 + 2\,v\big)\,\d x 
  - \frac{2}{r^{n+3}}\int_{\partial B_r} v^2\, d\mathcal{H}^{n-1}\,,
 \end{equation}
and prove its quasi-monotonicity for $v=u$ in case \eqref{e:normalization} is satisfied. 
Let us also introduce the bulk energy
\begin{equation}\label{e:E_u}
\EEE_v(r):=\int_{B_r}(|\nabla v|^2+2v)\d x
\end{equation}
and the boundary energy 
\begin{equation}\label{e:H_u}
\HHH_v(r):=\int_{\partial B_r}v^2\d \HH\,,
\end{equation}
so that
\[
\Phi_v(r)=\frac1{r^{n+2}}\EEE_v(r)-\frac2{r^{n+3}}\HHH_v(r)\,.
\]
In the rest of the section to ease the notation we write $u_r$ in place of $u_{\zz,r}$. 
Note then that 
\begin{equation}\label{e:estimate E_u-H_u}
\EEE_{u_r}(1)=\frac{\EEE_u(r)}{r^{n+2}}\leq C ,\qquad \HHH_{u_r}(1)=\frac{\HHH_u(r)}{r^{n+3}}\leq C\,,
\end{equation}
thanks to the bound \eqref{e:quadratic growth0} in Remark~\ref{r:Caccioppoli} (we stress that no continuity assumption on 
$\mathbb{A}$ or $f$ is needed).

We establish next two auxiliary results. The first is well-known, we prove it only for the readers' convenience
being the fundamental identity from which Weiss' quasi-monotonicity is inferred.
 \begin{lemma}
 Assume (H1), (H2) and (H4). 
Let $u$ be the minimizer of $\mathscr{E}$ over $\mathbb{K}_{\psi,g}$. If  $\zz\in\Omega$, for every $r\in(0,\mathrm{dist}(\zz,\partial\Omega))$ consider 
\begin{equation}\label{e:2-hom extension}
w_r(x):=|x|^2u\big(r{\textstyle{\frac{x}{|x|}}}\big)\,.
\end{equation}
Then, it is true that 
 \begin{equation}\label{e:Phi'}
  \Phi_{u}'(r)=\frac{n+2}r(\Phi_{w_r}(1)-\Phi_{u_r}(1))+
  \frac1r\int_{\partial B_1}(\langle\nabla u_r,\nu\rangle-2u_r)^2\d \mathcal{H}^{n-1}\,.
 \end{equation}
 \end{lemma}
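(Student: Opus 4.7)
The identity \eqref{e:Phi'} is purely algebraic in $u$: no PDE information enters, it follows solely from the definitions of $\Phi_v$, the rescaling $u_r$, and the $2$-homogeneous extension $w_r$. My strategy is to differentiate $\Phi_u(r)$ directly, translate to the unit scale via $y = rx$, then recognize two pieces in the result: a bulk contribution $(n+2)(\Phi_{w_r}(1) - \Phi_{u_r}(1))$, measuring how far $u_r$ is from being $2$-homogeneous inside $B_1$, and a boundary squared term $\int_{\partial B_1}(\partial_\nu u_r - 2u_r)^2\d\HH$, which vanishes precisely when the Euler identity for a $2$-homogeneous function holds on $\partial B_1$.

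Explicitly, from $\Phi_u(r) = r^{-(n+2)}\EEE_u(r) - 2r^{-(n+3)}\HHH_u(r)$ the coarea formula gives $\EEE_u'(r) = \int_{\partial B_r}(|\nabla u|^2 + 2u)\d\HH$ and $\HHH_u'(r) = \frac{n-1}{r}\HHH_u(r) + 2\int_{\partial B_r}u\,\partial_\nu u\d\HH$. Collecting terms and rewriting everything at unit scale through the rescalings
\[
\EEE_u(r) = r^{n+2}\EEE_{u_r}(1),\qquad \HHH_u(r) = r^{n+3}\HHH_{u_r}(1),
\]
\[
\int_{\partial B_r}\!\!|\nabla u|^2 = r^{n+1}\!\!\int_{\partial B_1}\!\!|\nabla u_r|^2,\quad \int_{\partial B_r}\!\!u = r^{n+1}\!\!\int_{\partial B_1}\!\!u_r,\quad \int_{\partial B_r}\!\!u\,\partial_\nu u = r^{n+2}\!\!\int_{\partial B_1}\!\!u_r\,\partial_\nu u_r,
\]
one lands at
\[
r\Phi_u'(r) = -(n+2)\EEE_{u_r}(1) + 2\!\!\int_{\partial B_1}\!\!u_r + \int_{\partial B_1}\!\!|\nabla u_r|^2 + 8\!\!\int_{\partial B_1}\!\!u_r^2 - 4\!\!\int_{\partial B_1}\!\!u_r\partial_\nu u_r.
\]

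Splitting $|\nabla u_r|^2 = (\partial_\nu u_r)^2 + |\nabla_\tau u_r|^2$ on $\partial B_1$ and completing the square,
\[
\int_{\partial B_1}\!\!\bigl(|\nabla u_r|^2 + 8u_r^2 - 4u_r\partial_\nu u_r\bigr) = \int_{\partial B_1}\!\!(\partial_\nu u_r - 2u_r)^2 + \int_{\partial B_1}\!\!|\nabla_\tau u_r|^2 + 4\!\!\int_{\partial B_1}\!\!u_r^2.
\]
Finally, $w_r$ is $2$-homogeneous with $w_r|_{\partial B_1} = u_r|_{\partial B_1}$, so in polar coordinates $w_r(t\omega) = t^2 u_r(\omega)$ and $|\nabla w_r|^2(t\omega) = 4t^2 u_r^2(\omega) + t^2|\nabla_\tau u_r(\omega)|^2$; an explicit radial integration yields
\[
(n+2)\Phi_{w_r}(1) = \int_{\partial B_1}\!\!|\nabla_\tau u_r|^2 - 2n\!\!\int_{\partial B_1}\!\!u_r^2 + 2\!\!\int_{\partial B_1}\!\!u_r.
\]
Subtracting $(n+2)\Phi_{u_r}(1) = (n+2)\EEE_{u_r}(1) - 2(n+2)\HHH_{u_r}(1)$ reproduces the non-squared contribution in the preceding displays, and assembling the three pieces gives \eqref{e:Phi'}.

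The computation is routine bookkeeping once one commits to the radial/tangential decomposition on $\partial B_1$; the one essential manipulation is the completion of the square producing $(\partial_\nu u_r - 2u_r)^2$, which yields a manifestly non-negative boundary term (the source of monotonicity in the subsequent Weiss formula) and whose vanishing characterizes $2$-homogeneous competitors. Notably, hypotheses (H1), (H2), (H4) play no role at this algebraic stage; they enter only later, when the bulk quantity $\Phi_{w_r}(1) - \Phi_{u_r}(1)$ is estimated from below by comparing $u_r$ against $w_r$ via the minimality of $u$.
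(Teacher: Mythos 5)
Your proof is correct and follows essentially the same route as the paper: differentiate $\Phi_u$, compute $\mathscr{E}_u'$ and $\mathscr{H}_u'$ by the coarea formula, rescale to unit scale, perform the radial/tangential split and completion of square on $\partial B_1$, and identify $\Phi_{w_r}(1)$ with the remaining boundary integral (you carry $\mathscr{E}_{u_r}(1)$ explicitly while the paper substitutes $\Phi_u(r)=\Phi_{u_r}(1)$ one step earlier, but these are trivially equivalent bookkeeping choices).

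One small remark, as much about the paper as your write-up: when you write $w_r(t\omega)=t^2u_r(\omega)$ you are implicitly reading the definition as $w_r(x)=|x|^2 u_r(x/|x|)$, which is what the identity $\Phi_{w_r}(1)=\tfrac1{n+2}\int_{\partial B_1}\left(|\nabla_\tau u_r|^2-2nu_r^2+2u_r\right)\,\d\mathcal{H}^{n-1}$ actually requires; as literally printed, $w_r(x)=|x|^2u(rx/|x|)$ differs from this by a factor $r^2$ (so $w_r|_{\partial B_1}=r^2u_r|_{\partial B_1}$, not $u_r|_{\partial B_1}$). This is a harmless notational slip in the statement, and your computation uses the intended normalization, so the argument goes through.
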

\begin{proof}
By definition for every $r\in (0,\mathrm{dist}(\zz,\partial\Om))$ we have
\begin{equation}\label{e:phiprime}
\Phi^\prime_u(r)=\frac {\EEE_u^\prime(r)}{r^{n+2}}-(n+2)\frac{\EEE_u(r)}{r^{n+3}}
-2\frac{\HHH_u^\prime(r)}{r^{n+3}}+2(n+3)\frac {\HHH_u(r)}{r^{n+4}}.
\end{equation}
A direct computation then gives 
\begin{align}\label{e:E'}
\EEE_u'(r)
%&=\dfrac{\d}{\d r} \, \int_{0}^{r}\int_{\partial B_{\rho}}  (| \nabla u|^{2} + 2u )\, \d \mathcal{H}^{n-1} \d \rho\notag\\&
= \int_{\partial B_{r}} (| \nabla u|^{2} + 2u \,) \d \mathcal{H}^{n-1}=r^{n+1}\int_{\partial B_{1}} (| \nabla u_{r}|^{2} + 2u_{r} \,) \d \mathcal{H}^{n-1}.
\end{align}
By scaling it is easy to check that 
\begin{align}\label{e:H'}
\HHH_u'(r)&=%\dfrac{\d}{\d r}\left( \int_{\partial B_{r}} u^{2} \d \mathcal{H}^{n-1} \right)=
\dfrac{\d}{\d r}\left( \int_{\partial B_{1}} u^{2}(ry) r^{n-1}\d \mathcal{H}^{n-1} \right)\notag\\
%&= \int_{\partial B_{1}}u^{2}(ry)r^{n-2}\d \mathcal{H}^{n-1}+\int_{\partial B_{1}} 2r^{n-1} u(ry)\langle \nabla u(ry), y\rangle \, \d \mathcal{H}^{n-1}\notag\\
&=\dfrac{n-1}{r}\HHH_u(r)+2r^{n+2}\int_{\partial B_{1}} u_r \langle\nabla u_r,y \rangle \d \mathcal{H}^{n-1}\,,
\end{align}
therefore
\begin{align}\label{e:H'+H}
-  \dfrac{2}{r^{n+3}}\HHH_u'(r)+& \dfrac{2(n+3)}{r^{n+4}}\HHH_u(r)
=\dfrac{8}{r^{n+4}}\HHH_u(r)-\dfrac{4}{r}\int_{\partial B_{1}}  u_r \langle\nabla u_r,y \rangle \d \mathcal{H}^{n-1}\,.%\notag \\
\end{align}
Plugging \eqref{e:H'+H} and \eqref{e:E'} in \eqref{e:Phi'} we get
\begin{align*}%\label{Phi nu}
\Phi'_{u}(r)
&= \dfrac{1}{r}\int_{\partial B_{1}} (| \nabla u_{r}|^{2} + 2u_{r} \,) \d \mathcal{H}^{n-1} -(n+2)\dfrac{\EEE_u(r)}{r^{n+3}}
%\notag\\& 
+\dfrac{8}{r^{n+4}}\HHH_u(r)-\dfrac{4}{r}\int_{\partial B_{1}}  u_r \langle\nabla u_r,y \rangle \d \mathcal{H}^{n-1}
\notag\\& 
=-\frac{n+2}{r}\Phi_{u}(r)-\dfrac{2(n-2)}{r^{n+4}}\HHH_{u}(r)
+\frac{1}{r}\int_{\partial B_{1}} \left( | \nabla u_{r}|^{2} + 2u_{r} -4 u_{r}\langle\nabla u_{r},y \rangle\right) \d \mathcal{H}^{n-1}
\notag\\
 &=-\frac{n+2}{r}\Phi_{u}(r)+
 \frac{1}{r}\int_{\partial B_{1}} \Big( | \nabla u_{r}|^{2} + 2u_{r} -2(n-2)u_{r}^{2}-4 u_{r}\langle\nabla u_{r},y \rangle\Big) \d \mathcal{H}^{n-1}\notag\\
&=-\dfrac{n+2}{r}\Phi_{u}(r) +\dfrac{1}{r} \int_{\partial B_{1}} \Big( \big( \langle \nabla u_{r}, y \rangle  -2u_{r}\big)^{2} +| \nabla_{\tau} u_{r}|^{2} - 2nu_{r}^{2} + 2u_{r} \Big) \, \d \mathcal{H}^{n-1}\notag\\
&=\dfrac{n+2}{r} \left( \Phi_{w_{r}}(1) -\Phi_{u_{r}}(1) \right) + \dfrac{1}{r} \int_{\partial B_{1}} 
\left( \langle \nabla u_{r}, \nu \rangle  -2u_{r}\right)^{2} \d \mathcal{H}^{n-1}\,,
\end{align*}
where in the last equality we have used that
\begin{align*}
\Phi_{w_{r}}(1)= \dfrac{1}{n+2}\int_{\partial B_{1}} (| \nabla_{\tau} u_{r}|^{2} - 2nu_{r}^{2} + 2u_{r}) \, \d \mathcal{H}^{n-1}\,.
\end{align*} 
which follows from a direct computation of the energy of the $2$-homogeneous extension $w_{r}$.
\end{proof}
%\begin{remark}
%Clearly, the statement above holds more generally for every $v\in W^{1,2}(\Omega)$ for $\cL^1$-a.e. radii 
%$r\in(0,\mathrm{dist}(\zz,\partial\Omega))$.
%\end{remark}
The second result is a simple consequence of the continuity assumptions on the coefficients.
\begin{lemma}\label{l:Dir vs A}
Assume (H4) and \eqref{e:normalization}. For every 
$v\in W^{1,2}(B_r)$ with $v\geq 0$ $\cL^n$-a.e. on $B_r$ then
\begin{align}
&\Big|\int_{B_1}|\nabla v_r|^2\d x-\int_{B_1}\langle\mathbb{A}(r x)\nabla v_r,\nabla v_r\rangle\d x\Big|
\leq\omega_{\mathbb{A}}(r)\int_{B_1}|\nabla v_r|^2\d x \label{e:Dir vs A}\\
&\Big|\int_{B_1}(1-f(r x))v_r\d x\Big|\leq\omega_f(r)\int_{B_1}v_r\d x\,,\label{e:1 vs f}
\end{align}
\end{lemma}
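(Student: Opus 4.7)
The plan is to exploit the normalization condition \eqref{e:normalization}, which gives $\mathbb{A}(\zz) = \mathrm{Id}$ and $f(\zz) = 1$, and then apply the definition of the modulus of continuity pointwise inside the integral.

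For the first inequality, I would rewrite the left-hand side as
\[
\int_{B_1}\langle (\mathrm{Id} - \mathbb{A}(rx))\nabla v_r, \nabla v_r\rangle\, dx
= \int_{B_1}\langle (\mathbb{A}(\zz) - \mathbb{A}(rx))\nabla v_r, \nabla v_r\rangle\, dx,
\]
so that the Cauchy--Schwarz inequality for the pointwise bilinear form yields
\[
\left|\int_{B_1}\langle (\mathbb{A}(\zz) - \mathbb{A}(rx))\nabla v_r, \nabla v_r\rangle\, dx\right|
\leq \int_{B_1}\|\mathbb{A}(\zz) - \mathbb{A}(rx)\|\, |\nabla v_r|^2\, dx.
\]
For $x \in B_1$ one has $|rx - \zz| = r|x| \leq r$, hence by the definition of the modulus of continuity and its monotonicity $\|\mathbb{A}(\zz) - \mathbb{A}(rx)\| \leq \omega_{\mathbb{A}}(r|x|) \leq \omega_{\mathbb{A}}(r)$. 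Pulling this uniform bound out of the integral gives \eqref{e:Dir vs A}.

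For the second inequality, the same device applies: writing $1 - f(rx) = f(\zz) - f(rx)$, the pointwise estimate $|f(\zz) - f(rx)| \leq \omega_f(r|x|) \leq \omega_f(r)$ valid on $B_1$ together with the sign condition $v_r \geq 0$ (which turns $|v_r|$ into $v_r$) produces \eqref{e:1 vs f} directly. Note that no Dini-type integrability of $\omega_{\mathbb{A}}$ or $\omega_f$ is actually needed at this step; mere continuity suffices, with (H4) coming in only when this lemma is later combined with other ingredients. There is no real obstacle here: the argument is a one-line application of the normalization and the modulus of continuity inside the integrals, and the non-negativity of $v$ is used only to identify $\int v_r$ with $\int |v_r|$ in the second estimate.
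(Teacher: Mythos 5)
Your proof is correct and is precisely the argument the paper has in mind; the paper's own proof is the single sentence "It suffices to take into account the normalization assumption \eqref{e:normalization} and the definition of modulus of continuity," which you have simply spelled out. Your side remark that only continuity (not Dini integrability) of $\omega_{\mathbb{A}}$ and $\omega_f$ is used at this stage is also accurate.
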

\begin{proof}
It suffices to take in to account the normalization assumption \eqref{e:normalization} and the definition of modulus of continuity.
\end{proof}
Let us first establish Weiss' quasi-monotonicity under the normalization condition.
\begin{proposition}\label{p:Weiss normalized}
Assume (H1), (H2), (H4) and \eqref{e:normalization}.
Let $u$ be the minimizer of $\mathscr{E}$ over $\mathbb{K}_{\psi,g}$.
There is a dimensional constant $C=C(n)>0$ such that if for some 
$\gamma\geq1$ 
\begin{equation}\label{e:bound EEE_u}
\|u_r\|_{L^\infty(B_2)}+\|\nabla u_r\|_{L^2(B_2;\R^n)}\leq \gamma
\end{equation}
for every $r\in(0,\frac12\mathrm{dist}(\zz,\partial\Omega))$, then 
\begin{equation}\label{e:Weiss normalized}
\frac{\d}{\d r}\left(\Phi_{u}(r)+C\gamma^2\int_0^r\frac{\omega(t)}t\d t\right)\geq 
\frac{1}{r}\int_{\partial B_1} (\langle\nabla u_r, x\rangle-2u_r)^2 
\d\mathcal{H}^{n-1},
\end{equation}
for every $r\in(0,\frac12\mathrm{dist}(\zz,\partial\Omega))$, where 
$\omega(r):=\omega_{\mathbb{A}}(r)+\omega_f(r)$. 

In particular, $\Phi_u$ has a finite right limit in $0$ denoted by $\Phi_u(0^+)$.
\end{proposition}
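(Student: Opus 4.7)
The plan is to combine the algebraic identity \eqref{e:Phi'} with an energy-comparison argument. Since the boundary integral $\tfrac{1}{r}\int_{\partial B_1}(\langle\nabla u_r,x\rangle-2u_r)^2\,d\mathcal{H}^{n-1}$ already supplies the non-negative right-hand side of \eqref{e:Weiss normalized}, the entire task is to bound $\Phi_{u_r}(1)-\Phi_{w_r}(1)$ from above by (essentially) $C\gamma^2\omega(r)$, at least after integration in $r$.

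The key observation is that the rescaled function $u_r$ minimizes the variable-coefficient energy in \eqref{e:enrg2 u_r} on $B_1$ among non-negative functions sharing its trace on $\partial B_1$, and that the two-homogeneous extension $w_r$ is a legitimate competitor: it is non-negative and has the same trace, and its membership in $W^{1,2}(B_1)$ holds for a.e.~$r$ via Fubini applied to $u_r\in W^{1,2}(B_2)$. From $\mathscr{E}_r(u_r)\le \mathscr{E}_r(w_r)$ and two applications of Lemma~\ref{l:Dir vs A} to convert variable-coefficient into constant-coefficient integrals (with errors controlled by $\omega(r)$ times the underlying energies), one obtains
\[
\Phi_{u_r}(1)-\Phi_{w_r}(1) \leq C\,\omega(r)\,\bigl(E(u_r)+E(w_r)\bigr), \qquad E(v):=\int_{B_1}\bigl(|\nabla v|^2+v\bigr)\,dx,
\]
since the boundary terms in $\Phi_{\cdot}(1)$ cancel because $u_r=w_r$ on $\partial B_1$. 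Hypothesis \eqref{e:bound EEE_u} gives $E(u_r)\le C\gamma^2$; the $L^\infty$ bound on $u_r$ further controls $\int_{B_1}w_r$ and the part $\tfrac{4}{n+2}\int_{\partial B_1}u_r^2$ of $\int_{B_1}|\nabla w_r|^2$ by $C\gamma^2$.

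The only remaining contribution is $\int_{\partial B_1}|\nabla_\tau u_r|^2\,d\mathcal{H}^{n-1}$, and this is the main obstacle: it admits no uniform pointwise bound in $r$, only the Fubini/coarea identity $\int_0^R\!\int_{\partial B_r}|\nabla u|^2\,d\mathcal{H}^{n-1}\,dr=\int_{B_R}|\nabla u|^2 \leq C\gamma^2R^{n+2}$. I therefore pass to the integrated form of the inequality on $(a,b)$ and, after rescaling $\int_{\partial B_1}|\nabla_\tau u_r|^2 \leq r^{-n-1}\!\int_{\partial B_r}|\nabla u|^2$, perform an integration by parts in $r$ against the weight $\omega(r)\,r^{-n-2}$ using $P(r):=\int_{B_r}|\nabla u|^2\leq C\gamma^2r^{n+2}$. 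Monotonicity of $\omega$ makes the $\omega'$-contribution non-positive, and the surviving $-(n+2)\omega(r)r^{-n-3}P(r)$ term is dominated by $C\gamma^2\omega(r)/r$, yielding an estimate of the form
\[
\int_a^b \tfrac{\omega(r)}{r}\!\int_{\partial B_1}|\nabla_\tau u_r|^2\,d\mathcal{H}^{n-1}\,dr \leq C\gamma^2\Bigl(\omega(b)+\int_a^b \tfrac{\omega(r)}{r}\,dr\Bigr),
\]
and the boundary term $\omega(b)$ is in turn absorbed via $\omega(b)\ln 2 \leq \int_b^{2b}\omega(r)/r\,dr$.

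Inserting these bounds into the integrated identity for $\Phi_u'$ shows that (up to adjusting the constant $C$) the map $r\mapsto \Phi_u(r)+C\gamma^2\!\int_0^r\omega(t)/t\,dt$ is non-decreasing with distributional derivative at least $\tfrac{1}{r}\int_{\partial B_1}(\langle\nabla u_r,x\rangle-2u_r)^2\,d\mathcal{H}^{n-1}$, which is the quasi-monotonicity claim \eqref{e:Weiss normalized}. The existence of a finite right limit $\Phi_u(0^+)$ then follows from this monotonicity, the Dini property $\int_0^r\omega(t)/t\,dt\to0$ as $r\to 0^+$, and the uniform boundedness of $\Phi_u$ on the given interval coming from \eqref{e:estimate E_u-H_u}.
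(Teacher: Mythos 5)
Your strategy starts out on the same track as the paper: use the identity \eqref{e:Phi'}, exploit the minimality of $u_r$ in \eqref{e:enrg2 u_r} with the $2$-homogeneous extension $w_r$ as competitor, and transfer between the constant- and variable-coefficient energies via Lemma~\ref{l:Dir vs A}. You also correctly identify the difficulty: the error term from Lemma~\ref{l:Dir vs A} involves $\EEE_{w_r}(1)$, and hence $\int_{\partial B_1}|\nabla_\tau u_r|^2\,d\mathcal{H}^{n-1}$, which has no uniform pointwise bound in $r$. The paper, however, never needs to bound $\EEE_{w_r}(1)$ at all — and that is precisely the observation you are missing.

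Here is the idea. Setting $X_r:=\Phi_{u_r}(1)-\Phi_{w_r}(1)=\EEE_{u_r}(1)-\EEE_{w_r}(1)$ (the boundary terms cancel), minimality plus Lemma~\ref{l:Dir vs A} gives $X_r\le \omega(r)\bigl(\EEE_{u_r}(1)+\EEE_{w_r}(1)\bigr)$. The paper distinguishes: if $X_r\le 0$ there is nothing to prove, and if $X_r>0$, i.e.\ $\EEE_{w_r}(1)<\EEE_{u_r}(1)$, then one simply replaces $\EEE_{w_r}(1)$ by $\EEE_{u_r}(1)$ in the right-hand side to obtain $X_r\le 2\omega(r)\EEE_{u_r}(1)\le C\gamma^2\omega(r)$. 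Equivalently, and without any case split, rewrite $\EEE_{w_r}(1)=\EEE_{u_r}(1)-X_r$ inside the bound to get $(1+\omega(r))X_r\le 2\omega(r)\EEE_{u_r}(1)$, hence $X_r\le 2\omega(r)\EEE_{u_r}(1)\le C\gamma^2\omega(r)$. Either way, the dangerous quantity $\EEE_{w_r}(1)$ disappears, and the pointwise inequality \eqref{e:Weiss normalized} follows directly from \eqref{e:Phi'} and \eqref{e:bound EEE_u}.

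Your workaround — integrating in $r$ and integrating by parts against $\omega(r)r^{-n-2}$ — does not close the argument. After integration by parts you obtain a boundary term of order $C\gamma^2\omega(b)$ which cannot be absorbed into $C\gamma^2\int_a^b\omega(r)/r\,dr$: taking $a\uparrow b$, the integral on the right tends to $0$ while $\omega(b)$ does not, so the inequality $\omega(b)\lesssim\int_a^b\omega(r)/r\,dr$ fails for $a$ close to $b$. The absorption you invoke, $\omega(b)\ln 2\le\int_b^{2b}\omega(r)/r\,dr$, involves the interval $(b,2b)$, which is disjoint from $(a,b)$, and therefore does not feed back into the telescoping structure needed to conclude that $r\mapsto\Phi_u(r)+C\gamma^2\int_0^r\omega(t)/t\,dt$ is nondecreasing. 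Consequently, as written, your argument yields neither the derivative inequality \eqref{e:Weiss normalized} nor the claimed monotonicity of the stated quantity; it would at best produce a weaker statement with an additional additive term $\omega(r)$ that cannot be removed by adjusting constants.
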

\begin{proof}
For $r\in(0,\mathrm{dist}(\zz,\partial\Omega))$, we use formula \eqref{e:Phi'} for $\Phi_u'$ in combination 
with one of the following alternatives
\begin{itemize}
\item[(a)] $\Phi_{w_r}(1)\geq \Phi_{u_r}(1)$,
\item[(b)] $\Phi_{w_r}(1)<\Phi_{u_r}(1)$.
\end{itemize}
In case (a) we conclude that 
\begin{equation}\label{e:qmon 1}
\Phi_u'(r)\geq \dfrac{1}{r} \int_{\partial B_{1}} 
\left( \langle \nabla u_{r}, \nu \rangle  -2u_{r}\right)^{2} \d \mathcal{H}^{n-1}\,.
\end{equation}
Otherwise, being $u_r|_{\partial B_1}=w_r|_{\partial B_1}$, the inequality defining case (b) rewrites as
\begin{equation}\label{e:qmon 2}
\EEE_{w_r}(1)<\EEE_{u_r}(1)\,.
\end{equation}
Thus we may estimate $\Phi_{w_r}(1)-\Phi_{u_r}(1)$ from below by taking into account Lemma~\ref{l:Dir vs A} 
and that $u_r$ minimizes the functional in \eqref{e:enrg2 u_r} with respect to its boundary values, to conclude that 
\begin{align*}%\label{e:qmon 2.1}
\Phi_{w_r}(1)&-\Phi_{u_r}(1)=\EEE_{w_r}(1)-\EEE_{u_r}(1)\notag\\
&\geq\int_{B_1}\left(\langle\mathbb{A}(rx)\nabla w_r,\nabla w_r\rangle+2f(rx)w_r\right)\d x
-\int_{B_1}\left(\langle\mathbb{A}(rx)\nabla u_r,\nabla u_r\rangle+2f(rx)u_r\right)\d x\notag\\
&-\omega(r)(\EEE_{w_r}(1)+\EEE_{u_r}(1))
%&-\omega_{\mathbb{A}}(r)\int_{B_1}(|\nabla u_r|^2+|\nabla w_r|^2)\d x
%-\omega_f(r)\int_{B_1}2(w_r+u_r)\d x\notag\\&
\stackrel{\eqref{e:qmon 2}}{\geq}-2\omega(r)\EEE_{u_r}(1)
\stackrel{\eqref{e:bound EEE_u}}{\geq}-C\gamma^2 \omega(r)\,,
\end{align*}
where we have set $\omega(r)=\omega_{\mathbb{A}}(r)+\omega_f(r)$ and $C=C(n)>0$.
Hence, in case (b) %from \eqref{e:qmon 2.1} 
we infer that 
\begin{equation}\label{e:qmon 2.2}
\Phi_u'(r)\geq -C\gamma^2\frac{\omega(r)}{r}+\dfrac{1}{r} \int_{\partial B_{1}} 
\left( \langle \nabla u_{r}, \nu \rangle  -2u_{r}\right)^{2} \d \mathcal{H}^{n-1}\,.
\end{equation}
Inequalities \eqref{e:qmon 1} and \eqref{e:qmon 2.2} provide \eqref{e:Weiss normalized} for every $r\in(0,\frac12\mathrm{dist}(\zz,\partial\Omega))$.
\end{proof}
\begin{remark}
Recalling that $f$ and $\mathbb{A}$ are Dini continuous by (H4), 
the modulus of continuity $\omega$ provided by Proposition~\ref{p:Weiss normalized} is in turn Dini continuous.
\end{remark}
\begin{remark}\label{r:Weiss generalized}
An inspection of the proof above shows that Weiss' formula can be deduced even for a weaker notion of Dini continuity, 
that is actually the one used in \cite[Theorem~2.1 and Remark~2.2]{Li} to infer the mentioned $C^1$ regularity of solutions. 
In this respect, we need a different version of Lemma~\ref{l:Dir vs A}. To this aim, thanks to the mentioned Schauder estimates, 
in place of \eqref{e:Dir vs A} and \eqref{e:1 vs f} we may consider for $r$ sufficiently small the inequalities
\begin{align}
&\Big|\int_{B_1}|\nabla u_r|^2\d x-\int_{B_1}\langle\mathbb{A}(r x)\nabla u_r,\nabla u_r\rangle\d x\Big|
\leq \widetilde{\omega}(r)\|\nabla u_r\|^2_{L^\infty(B_1;\R^n)}
\,,\label{e:A vs Id bis}\\
&\Big|\int_{B_1}(1-f(r x))u_r\d x\Big|\leq 
\widetilde{\omega}(r)\|u_r\|_{L^\infty(B_1)}\,,\label{e:1 vs f bis}
\end{align}
where
\[
\widetilde{\omega}(r):=\cL^n(B_1) \left(\sup_{y\in B_2}\fint_{B_r(y)}\Big(\|\mathbb{A}(x)-\mathbb{A}(y)\|^2
%\d x\Big)^{\sfrac12}+\cL^n(B_1)  \Big(\sup_{y\in B_2}\fint_{B_r(y)}
+|f(x)-f(y)|^2\Big)\d x\right)^{\sfrac12}\,.
\]
Note that $\widetilde{\omega}$ is not a modulus of continuity according to the definition given above as it is not increasing.
Despite this, assume that it satisfies \eqref{e:Dini continuity}. 
Then, on one hand \cite[Theorem~2.1 and Remark~2.2]{Li} provide $C^1$ regularity with a uniform modulus of continuity for the 
gradient of the solution; on the other hand \eqref{e:A vs Id bis} and 
\eqref{e:1 vs f bis} together with \eqref{e:bound E_u+H_u} below (rather than \eqref{e:bound EEE_u}) yield Weiss' quasi-monotonicity formula. As it will be discussed in Section~\ref{ss:WM qmon} below, it is not restrictive to assume \eqref{e:bound E_u+H_u} thanks exactly to \cite[Theorem~2.1 and Remark~2.2]{Li}. 

Finally, we note that even weaker notions of continuity are allowed to get $C^1$ regularity of solutions to divergence form elliptic equations (cf. \cite{DK17,DEK18}). In this respect, Reifenberg vanishing flatness 
of $\Reg(u)$ %, in turn implying $C^1$ regularity 
had already been proved in case of $\mathrm{VMO}$ coefficients in \cite{BlankHao15bis} by means of PDEs arguments rather than using the variational approach (cf. the introduction for the definition of $\Reg(u)$).
\end{remark}

For what Monneau's formula is concerned, let %\subsection{Monneau's quasi-monotonicity formula}
$v$ be any positive $2$-homogeneous polynomial solution of 
\begin{equation}\label{e:v}
 \Delta v = 1\quad \textrm{on $\R^n$}.
\end{equation}
Then by $2$-homogeneity, elementary calculations lead to 
 \begin{equation}\label{e:Phiv}
\Phi_v(r)=\Phi_v(1)= \int_{B_1} v\d y,
 \end{equation}
for all $r>0$. It is easy to prove that the value above is a dimensional constant independent of $v$, 
which we denote by $\theta$. Then, being the space of polynomials of degree $2$ finite dimensional, 
and being $v$ $2$-homogeneous we infer that 
\begin{equation}\label{e:estimate v norms}
\|\nabla v\|_{L^2(B_1)}+\|v\|_{L^2(\partial B_1)}\leq C(n)\,.
\end{equation}
We prove next a quasi-monotonicity formula for solutions of the obstacle problem in case $x_0\in\Gamma_u$ 
is a singular point of the free boundary, namely it is such that 
\begin{equation}\label{e:en BU sing Monneau}
 \Phi_u(0^+)=\theta\,. %\Phi_v(1)
\end{equation}
To prove Monneau's formula we need to strengthen condition \eqref{e:bound EEE_u} (cf. \eqref{e:bound E_u+H_u} below).
\begin{proposition}\label{p:Monneau normalized}
Assume (H1), (H2), (H5) with $a=1$, and \eqref{e:normalization}.
Let $u$ be the minimizer of $\mathscr{E}$ over 
$\mathbb{K}_{\psi,g}$.
There exists a dimensional constant 
$C=C(n)>0$ such that if for some $\gamma\geq1$
\begin{equation}\label{e:bound E_u+H_u}
\|u_r\|_{L^\infty(B_2)}+\|\nabla u_r\|_{L^\infty(B_2;\R^n)}\leq \gamma
\end{equation}
for every $r\in(0,\frac12\mathrm{dist}(\zz,\partial\Omega))$, 
then the function 
 \begin{equation}\label{e:Monneau normalized}
(0,{\textstyle{\frac12}}\mathrm{dist}(\zz,\partial\Omega))
\ni r\longmapsto  %\frac{1}{r^{n+3}}
  \int_{\partial B_1} (u_r-v_r)^2\,\d x + 
C\gamma^2\int_0^r\frac{dt}{t}\int_0^t\frac{\omega(s)}{s}\d s
%+C\gamma\int_0^r\frac{\omega(t)}{t}\,dt
\end{equation}
is nondecreasing, where $v$ is any positive $2$-homogeneous polynomial solution of \eqref{e:v}, 
and $\omega$ is the modulus of continuity provided by Proposition~\ref{p:Weiss normalized}. 
 \end{proposition}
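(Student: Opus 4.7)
The plan is to imitate the classical Monneau argument and carefully track the errors caused by the variable coefficients. Since $v$ is a $2$-homogeneous polynomial, $v_r \equiv v$; setting $w := u_r - v$ we have $M(r) := \int_{\partial B_1} w^2\,\d\mathcal{H}^{n-1}$. Using the scaling identity $\partial_r u_r = (\langle \nabla u_r, x\rangle - 2u_r)/r$ and the Euler relation $\langle \nabla v, \nu\rangle = 2v$ on $\partial B_1$, I would compute
\[
M'(r) = \frac{2}{r}\int_{\partial B_1} w \langle \nabla w, \nu\rangle\,\d\mathcal{H}^{n-1} - \frac{4}{r}\int_{\partial B_1} w^2\,\d\mathcal{H}^{n-1}.
\]

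The crux is a lower bound for the boundary integral. I would apply Corollary~\ref{c:integration by parts} (in its rescaled form for \eqref{e:PDE u_r}) with test function $w$ and the constant-coefficient Green identity for $\Delta v = 1$ with the same test. Thanks to \eqref{e:normalization}, $\mathbb{A}(0) = \mathrm{Id}$ and $f(0) = 1$, so the substitutions $\mathbb{A}(rx) \leadsto \mathrm{Id}$ and $f(rx) \leadsto 1$ on $B_1$ introduce errors of sizes $\omega_{\mathbb{A}}(r)$ and $\omega_f(r)$ which, by the uniform bounds \eqref{e:bound E_u+H_u}, aggregate to $C\gamma^2\omega(r)$ with $\omega := \omega_{\mathbb{A}} + \omega_f$. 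The decisive positivity input is the structure of the Lagrange multiplier $\zeta$ from Proposition~\ref{p:PDE u}: $\zeta\, u_r = 0$ and $\zeta \geq 0$, combined with $v \geq 0$, give
\[
-\int_{B_1} \zeta(rx)\, w\,\d x = \int_{B_1} \zeta(rx)\, v\,\d x \geq 0,
\]
which is the very feature distinguishing Monneau's from Weiss' formula. After the linear $\int_{B_1} w\,\d x$ contributions coming from the two integrations by parts cancel, this yields
\[
\int_{\partial B_1} w \langle \nabla w, \nu\rangle\,\d\mathcal{H}^{n-1} \geq \int_{B_1} |\nabla w|^2\,\d x - C\gamma^2\omega(r).
\]

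A short, error-free algebraic computation (exact because $v$ solves a constant-coefficient equation and is $2$-homogeneous, so $\Phi_v \equiv \theta$) delivers the identity
\[
\int_{B_1}|\nabla w|^2\,\d x - 2\int_{\partial B_1} w^2\,\d\mathcal{H}^{n-1} = \Phi_u(r) - \theta,
\]
so that
\[
M'(r) \geq \frac{2}{r}\bigl(\Phi_u(r) - \theta\bigr) - \frac{C\gamma^2\omega(r)}{r}.
\]
Since $\zz$ is singular, $\Phi_u(0^+) = \theta$, so integrating Weiss' formula from Proposition~\ref{p:Weiss normalized} yields $\Phi_u(r) - \theta \geq -C\gamma^2\int_0^r \omega(t)/t\,\d t$, whence
\[
M'(r) \geq -\frac{C\gamma^2}{r}\Bigl(\omega(r) + \int_0^r \frac{\omega(t)}{t}\,\d t\Bigr).
\]
The correction term in \eqref{e:Monneau normalized} has derivative $\frac{C\gamma^2}{r}\int_0^r \omega(s)/s\,\d s$, and enlarging $C$ appropriately (using monotonicity of $\omega$ to dominate the pointwise term $\omega(r)/r$ by a constant multiple of $r^{-1}\int_0^r\omega(s)/s\,\d s$) absorbs both error contributions; the correction itself is finite precisely under (H5) with $a=1$. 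The main technical hurdle is the integration-by-parts step above: one must keep all coefficient-perturbation errors at the scale $\omega(r)$ while preserving the sign of $\int \zeta(rx) v\,\d x$, which is the heart of Monneau's argument and the reason one can deduce the formula from Proposition~\ref{p:PDE u} alone, without the refined inclusion $\zeta=f\chi_{\{u=0\}}$ requiring higher regularity of $\mathbb{A}$.
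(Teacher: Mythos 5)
Your proposal is correct and follows essentially the same route as the paper: differentiate the Monneau quantity via the scaling identity, freeze the coefficients $\mathbb{A}(rx)\leadsto\mathrm{Id}$ and $f(rx)\leadsto 1$ at the cost of $\omega$-errors controlled by \eqref{e:bound E_u+H_u}, integrate by parts via Corollary~\ref{c:integration by parts} (and the constant-coefficient Green identity for $v$), exploit the sign structure of $\zeta$ from Proposition~\ref{p:PDE u} (namely $\zeta\geq 0$ and $\zeta\,u_r=0$, hence $-\int\zeta(rx)w\,\d x=\int\zeta(rx)v\,\d x\geq 0$), and then invoke the integrated Weiss formula from Proposition~\ref{p:Weiss normalized} to close the argument. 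The one genuine streamlining you introduce is the compact identity $\int_{B_1}|\nabla w|^2\,\d x-2\int_{\partial B_1}w^2\,\d\mathcal{H}^{n-1}=\Phi_{u_r}(1)-\theta$, which packages the algebra that the paper carries out more piecemeal (splitting into $I_1$, $I_2$ and substituting $\Phi_v(1)=\theta$ at the end); this is a small but genuine gain in transparency. One fine point you gloss over in the same way the paper does: to absorb the single term $\omega(r)/r$ into the derivative of the correction $\tfrac{C\gamma^2}{r}\int_0^r\omega(s)/s\,\d s$, monotonicity of $\omega$ alone gives $\omega(r)\leq\tfrac{1}{\log(1/\lambda)}\int_0^{r/\lambda}\omega(s)/s\,\d s$, i.e.\ you either need a doubling/concavity property of $\omega$ (which one may always assume WLOG for a modulus of continuity) or a harmless rescaling $\omega(\cdot)\mapsto\omega(\cdot/\lambda)$ in the statement; this is standard and does not affect the substance.
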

\begin{proof}
Let $w_r:=u_r-v$, then by scaling and by taking into account the $2$-homogeneity of $v$ we get
\begin{align}\label{e:monneau 1}
\frac{\d}{\d r}&\left(\frac1{r^{n+3}}\int_{\partial B_1}(u-v)^2\d\HH\right)=
\frac{\d}{\d r}\left(\int_{\partial B_1}w_r^2\d\HH\right)=\frac2r\int_{\partial B_1}w_r(\langle \nabla u_r,x\rangle-2u_r)\d\HH
\notag\\
&\geq\frac2r\int_{\partial B_1}w_r(\langle \mathbb{A}(rx)\nabla u_r,x\rangle-2u_r)\d\HH
-\frac{\omega_{\mathbb{A}}(r)}r\|w_r\|_{L^2(\partial B_1)}\|\nabla u_r\|_{L^2(\partial B_1)}\notag\\
&\geq\frac2r\int_{\partial B_1}w_r(\langle \mathbb{A}(rx)\nabla u_r,x\rangle-2u_r)\d\HH
-C\gamma^2\frac{\omega_{\mathbb{A}}(r)}r\,,
\end{align}
for some $C=C(n)>0$, where we have used \eqref{e:estimate v norms} 
and \eqref{e:bound E_u+H_u} in the last inequality.

We use next the integration by parts formula \eqref{e:singular variation} in 
Corollary~\ref{c:integration by parts} to get
\begin{align*}
\int_{\partial B_1}&w_r \langle \mathbb{A}(rx)\nabla u_r,x\rangle\d\HH=
\int_{B_1}\langle \mathbb{A}(rx)\nabla u_r,\nabla w_r\rangle\d x+
\int_{B_1}(f(rx)-\zeta(rx))w_r\d x:=I_1+I_2\,.
\end{align*}
We estimate the two addends above separately. We start off with $I_1$:
\begin{align}\label{e:I_1}
I_1&=\int_{B_1}\langle \mathbb{A}(rx)\nabla u_r,\nabla u_r\rangle\d x
-\int_{B_1}\langle \mathbb{A}(rx)\nabla u_r,\nabla v\rangle\d x
\notag\\
&\geq\Phi_{u_r}(1)-\omega_{\mathbb{A}}(r)\int_{B_1}|\nabla u_r|^2\d x
-2\int_{B_1}u_r\d x+2\HHH_{u_r}(1)
-\int_{B_1}\langle \mathbb{A}(rx)\nabla u_r,\nabla v\rangle\d x\,.
\end{align}
By taking advantage of the $2$-homogeneity of $v$ and that $\triangle v=1$ (cf. \eqref{e:v}) we get that
\begin{align*}
-\int_{B_1}\langle \mathbb{A}(rx)\nabla u_r,\nabla v\rangle\d x&
\geq-\int_{B_1}\langle \nabla u_r,\nabla v\rangle\d x
-\omega_{\mathbb{A}}(r)\|\nabla u_r\|_{L^2(B_1)}\|\nabla v\|_{L^2(B_1)}\notag\\
&=\int_{B_1}u_r\d x-2\int_{\partial B_1}u_rv\d\HH
-\omega_{\mathbb{A}}(r)\|\nabla u_r\|_{L^2(B_1)}\|\nabla v\|_{L^2(B_1)}\notag\\
&\geq\int_{B_1}u_r\d x-2\int_{\partial B_1}u_rv\d\HH
-C\gamma\omega_{\mathbb{A}}(r)\,, %\|\nabla v\|_{L^2(B_1)}\,,
\end{align*}
for some $C=C(n)>0$, where we have used \eqref{e:estimate v norms} and 
\eqref{e:bound E_u+H_u} in the last inequality. Plugging the latter estimate in \eqref{e:I_1} we infer that 
\begin{align}\label{e:I_12}
I_1&\geq\Phi_{u_r}(1)
-\int_{B_1}u_r\d x+%2\HHH_{u_r}(1)\notag\\%&-2\int_{\partial B_1}u_rv\d\HH
2\int_{\partial B_1}u_rw_r\d\HH
-C\gamma^2\omega_{\mathbb{A}}(r)\,. %(1+\|\nabla v\|_{L^2(B_1)})\,.
\end{align}
Next note that by \eqref{e:zeta} in Proposition~\ref{p:PDE u}
\begin{align}\label{e:I_2}
I_2&=\int_{B_1}(f(rx)-\zeta(rx))u_r\d x-\int_{B_1}(f(rx)-\zeta(rx))v\d x\notag\\
&\geq\int_{B_1}(f(rx)-f(rx)\chi_{\{u_r=0\}})u_r\d x-\int_{B_1}f(rx)v\d x\notag\\
&=\int_{B_1}f(rx)\chi_{\{u_r>0\}}u_r-\int_{B_1}f(rx)v\d x=\int_{B_1}f(rx)(u_r-v)\d x\,.
\end{align}
We now use \eqref{e:Phiv}, \eqref{e:I_12} and \eqref{e:I_2}, to estimate \eqref{e:monneau 1} 
\begin{align}\label{e:monneau 2}
\frac{\d}{\d r}&\left(\frac1{r^{n+3}}\int_{\partial B_1}(u-v)^2\d\HH\right)\geq
\frac1r(\Phi_{u_r}(1)-\Phi_{v}(1))\notag\\
&+\frac1r\int_{B_1}(1-f(rx))(v-u_r)\d x
-C\gamma^2\frac{\omega_{\mathbb{A}}(r)}r\notag\\ 
&\stackrel{\eqref{e:estimate v norms}}{\geq}\frac1r(\Phi_{u_r}(1)-\Phi_{u}(0^+))
-C\gamma^2\frac{\omega(r)}r\,, 
\end{align}
for some $C=C(n)>0$. Therefore, we may finally use Proposition~\ref{p:Weiss normalized} to conclude that
\begin{align*}
\frac{\d}{\d r}&\left(\frac1{r^{n+3}}\int_{\partial B_1}(u-v)^2\d\HH\right)\\&\geq
\frac1r\int_0^r\frac{1}{t}\int_{\partial B_1} (\langle\nabla u_t, x\rangle-2u_t)^2 d\mathcal{H}^{n-1}
%\notag\\&
-\frac Cr\gamma^2\int_0^r\frac{\omega(t)}t\d t
-C\gamma^2\frac{\omega(r)}r\,.
\end{align*}
The conclusion then follows at once.
\end{proof}
%\comment{Anche assumendo che $\widetilde{\omega}$ in 
%Remark~\ref{r:Weiss generalized} soddisfi la double-Dini 
%la formula di Monneau non è ovvia per la stima in \eqref{e:monneau 1} dato che si dovrebbe stimare un termine su $\partial B_1$ e non su $B_1$.

\subsection{Weiss' and Monneau's quasi-monotonicity: general case}\label{ss:WM qmon}

To establish Weiss' and Monneau's monotonicity in general, we follow \cite{FoGeSp15} and show 
that by means of a change of variables one can always reduce to the normalized setting in \eqref{e:normalization}
for every free boundary point $x_0\in\Gamma_u$. 
Moreover, the new quantities appearing in the corresponding obstacle problems under such a transformation satisfy assumptions (H1), (H2) and either (H3) or (H4) or (H5), with uniform constants with respect 
to $x_0$, according to the assumption imposed on $\mathbb{A}$ 
and $f$.

Indeed, let $x_0 \in \Gamma_u$ be any point of the free boundary, consider the affine change of variables
\[
%\mathbf{T}_{x_0}(x):=
x\longmapsto x_0+f^{-\sfrac12}(x_0)\mathbb{A}^{\sfrac12}(x_0)x =: x_0 + \LL(x_0)\, x
\]
Changing variables leads to
\begin{equation}\label{e:cambio di coordinate3}
\mathscr{E}(u)=f^{1-\frac{n}{2}}(x_0)\det(\mathbb{A}^{\sfrac12}(x_0))\,\mathscr{E}_{\LL(x_0)}(u_{\LL(x_0)}),
\end{equation}
where we have set 
$\Omega_{\LL(x_0)}:=\LL^{-1}(x_0)\,(\Omega-x_0)$, and 
\begin{equation}\label{e:enrgA}
\mathscr{E}_{\LL(x_0)}(v):=\int_{\Omega_{\LL(x_0)}}\left(
\langle {\mathbb C}_{\LL(x_0)}(x)\nabla v,\nabla v\rangle 
+ 2\frac{f_{\LL(x_0)}}{f(x_0)}\,v\right)\d x,
\end{equation}
with
\begin{align}
&u_{{\LL(x_0)}}(x)  :=u\big(x_0+\LL(x_0)x\big), 
\label{e:cambio di coordinate1}\\
&f_{{\LL(x_0)}}(x)  :=f\big(x_0+\LL(x_0)x\big), \notag
%\label{e:cambio di coordinate2}
\notag\\
&{\mathbb C}_{\LL(x_0)}(x)  :=
\mathbb{A}^{-\sfrac12}(x_0)\mathbb{A}(x_0+\LL(x_0)x)\mathbb{A}^{-\sfrac12}(x_0).\notag
\end{align}
Note that $f_{\LL(x_0)}(\underline{0})=f(x_0)$ and ${\mathbb C}_{\LL(x_0)}(\underline{0})=\mathrm{Id}$.
Moreover, the free boundary is transformed under this map into
\[
\Gamma_{u_{\LL(x_0)}}=\LL^{-1}(x_0)(\Gamma_u-x_0),
\] 
and the energy $\mathscr{E}$ in \eqref{e:enrg} is minimized by $u$ on $\mathbb{K}_{\psi,g}$ if and only if 
$\mathscr{E}_{\LL(x_0)}$ in \eqref{e:enrgA} is minimized by $u_{\LL(x_0)}$ in \eqref{e:cambio di coordinate1} on 
$\mathbb{K}_{\psi(\LL^{-1}(x_0)(\cdot-x_0),g(\LL^{-1}(x_0)(\cdot-x_0))}$. In particular, the normalization assumption \eqref{e:normalization}
is satisfied. 

Moreover, ${\mathbb C}_{\LL(x_0)}$ and $\frac{f_{\LL(x_0)}}{f(x_0)}$ satisfy (H1)-(H5) with uniform constants. 
Indeed, for what (H1) is concerned, it is clear that 
$\mathbb{C}_{\LL(x_0)}(\cdot)$ is symmetric, bounded and coercive, with
  \begin{equation}\label{e:H1 non norm}
   \Lambda^{-2}|\xi|^2\leq \langle \mathbb{C}_{\LL(x_0)}(x)\xi, \xi\rangle \leq \Lambda^2|\xi|^2
  \end{equation}
 for $\cL^n$-a.e. $x\in\Omega_{\LL(x_0)}$, and for every 
 $\xi\in \R^n$.  Note that
 \begin{equation}\label{e:H2 non norm}
 \frac{c_0}{\|f\|_{L^\infty(\Omega)}}<\frac{f_{\LL(x_0)}}{f(x_0)}\leq\frac{\|f\|_{L^\infty(\Omega)}}{c_0}
 \end{equation}
 for $\cL^n$-a.e. $x\in\Omega_{\LL(x_0)}$, so that (H2) holds.
 Moreover, on setting $\overline{\omega}_{\mathbb{A}}(t):=
 (n\Lambda)^2\omega_{\mathbb{A}}\left({\textstyle\sqrt{\frac{n\Lambda}{c_0}}}t\right)$, it is clear that for every $x$, $y\in\Omega_{\LL(x_0)}$
\begin{equation}\label{e:H4 non norm}
\|{\mathbb C}_{\LL(x_0)}(x)-{\mathbb C}_{\LL(x_0)}(y)\|\leq \overline{\omega}_{\mathbb{A}}(|x-y|)\,.
\end{equation}
Analogously, $\frac{f_{\LL(x_0)}}{f(x_0)}$ has modulus of continuity
 $\overline{\omega}_f(t):=c_0^{-1}\omega_f\left({\textstyle\sqrt{\frac{n\Lambda}{c_0}}}t\right)$.
 Therefore, either (H3) or (H4) or (H5) holds, according to the corresponding assumption on $\mathbb{A}$ and $f$. 

Furthermore, we note that in view of \eqref{e:H1 non norm} and 
\eqref{e:H2 non norm} formula \eqref{e:bound EEE_u} is 
satisfied uniformly in $x_0$ and $r$.
More precisely, Remark~\ref{r:Caccioppoli} yields that if $R>0$,
$K\subset\Omega$ is compact and $x_0\in K$, then for every 
 $r\in \big(0, \frac{1}{4R}\mathrm{dist}(K, \partial\Omega)\big)$,
and for some constant $C=C(n,c_0,\Lambda,\|f\|_{L^\infty},R,K)>0$ 
it is true that
 \begin{equation}\label{e:u_r limitata C1alfa 2}
  \|u_{\LL(x_0),r}\|_{L^\infty(B_R)}  
  +\|\nabla u_{\LL(x_0),r}\|_{L^2(B_R;\R^n)}\leq C\,,
 \end{equation}
%and for $r$ sufficiently small, 
where we have set $u_{\LL(x_0),r}:=(u_{\LL(x_0)})_{\zz,r}$ (notice that
$\nabla u_{\LL(x_0),r}(\cdot)=\LL^t(x_0)\nabla u(x_0+\LL(x_0)\cdot)$).

We are now ready to establish Weiss' quasi-monotonicity formula by
applying Proposition~\ref{p:Weiss normalized} to 
$u_{\LL(x_0)}$ thanks to the discussion above.
\begin{theorem}[Weiss' quasi-monotonicity formula]\label{t:Weiss} 
Assume (H1), (H2) and (H4). 
Let $u$ be the minimizer of $\mathscr{E}$ over 
$\mathbb{K}_{\psi,g}$.
If $K\subset\Omega$ is a compact set, there is a constant $C=C(n,c_0,\Lambda,\|f\|_{L^\infty},K)>0$ such that for all $x_0\in K\cap \Gamma_u$ 
\begin{equation}\label{e:Weiss}
\frac{\d}{\d r}\Big(\Phi_{u_{\LL(x_0)}}(r) + C\int_0^r\frac{\overline{\omega}(t)}{t}\d t\Big)\geq 
\frac{1}r%{^{n+4}}
\int_{\partial B_1} (\langle\nabla u_{\LL(x_0),r}, x\rangle-2u_{\LL(x_0),r})^2 \d\mathcal{H}^{n-1},
\end{equation}
for every $r\in(0,\frac14\mathrm{dist}(K,\partial\Omega))$,
where $\overline{\omega}(r):=\overline{\omega}_{\mathbb{A}}(r)+\overline{\omega}_f(r)$. 

In particular, $\Phi_{u_{\LL(x_0)}}$ has finite right limit 
$\Phi_{u_{\LL(x_0)}}(0^+)$ in zero, 
and for all $r\in(0,\frac14\mathrm{dist}(K,\partial\Omega))$,
\begin{equation}\label{e:Phi(r)-Phi(0)}
 \Phi_{u_{\LL(x_0)}}(r)-\Phi_{u_{\LL(x_0)}}(0^+)\geq -C\int_0^r\frac{\overline{\omega}(t)}{t}\d t\,.
\end{equation} 
\end{theorem}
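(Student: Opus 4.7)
The plan is to reduce to the normalized case already handled in Proposition~\ref{p:Weiss normalized} by means of the affine change of variables $x \mapsto x_0 + \LL(x_0) x$ described immediately before the theorem statement. Since minimization is preserved under this transformation (cf.~\eqref{e:cambio di coordinate3}), the function $u_{\LL(x_0)}$ is the minimizer of the transformed energy $\mathscr{E}_{\LL(x_0)}$ with coefficient matrix $\mathbb{C}_{\LL(x_0)}$ and source $\frac{f_{\LL(x_0)}}{f(x_0)}$, and by construction $\mathbb{C}_{\LL(x_0)}(\zz) = \mathrm{Id}$, $\frac{f_{\LL(x_0)}}{f(x_0)}(\zz) = 1$, and $\zz \in \Gamma_{u_{\LL(x_0)}}$, i.e.\ the normalization \eqref{e:normalization} is in force.

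Next I would check that the hypotheses of Proposition~\ref{p:Weiss normalized} are met uniformly in $x_0 \in K \cap \Gamma_u$. Assumptions (H1) and (H2) for $\mathbb{C}_{\LL(x_0)}$ and $\frac{f_{\LL(x_0)}}{f(x_0)}$ follow from \eqref{e:H1 non norm} and \eqref{e:H2 non norm}, while (H4) is ensured by \eqref{e:H4 non norm} and its analogue for $f$, with a modulus of continuity $\overline{\omega}(t) = \overline{\omega}_{\mathbb{A}}(t) + \overline{\omega}_f(t)$ that depends only on $n$, $\Lambda$, $c_0$ and the moduli $\omega_{\mathbb{A}}, \omega_f$, hence is independent of $x_0$. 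The remaining hypothesis of Proposition~\ref{p:Weiss normalized}, namely the bound \eqref{e:bound EEE_u} for $u_{\LL(x_0),r}$ on $B_2$, is exactly \eqref{e:u_r limitata C1alfa 2}, which supplies a constant $\gamma = \gamma(n, c_0, \Lambda, \|f\|_{L^\infty}, K)$ valid for every $r \in (0, \frac14 \mathrm{dist}(K, \partial\Omega))$ (taking $R = 2$).

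With all hypotheses verified uniformly, Proposition~\ref{p:Weiss normalized} applied to $u_{\LL(x_0)}$ in place of $u$ yields \eqref{e:Weiss} directly, with $C = C(n) \gamma^2$ absorbed into the stated constant $C(n, c_0, \Lambda, \|f\|_{L^\infty}, K)$. Finally, integrating \eqref{e:Weiss} on $(0, r)$ and observing that the right-hand side is nonnegative, together with the fact that $\Phi_{u_{\LL(x_0)}}$ is bounded from below (from \eqref{e:estimate E_u-H_u} applied in the transformed coordinates) and $\int_0^r \overline{\omega}(t)/t\, dt < \infty$ by Dini continuity, shows that the sum $\Phi_{u_{\LL(x_0)}}(r) + C \int_0^r \overline{\omega}(t)/t\,dt$ is monotone nondecreasing and hence admits a finite limit $\Phi_{u_{\LL(x_0)}}(0^+)$ as $r \downarrow 0$, from which \eqref{e:Phi(r)-Phi(0)} is immediate.

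The only delicate point is really bookkeeping: one has to verify that all the constants appearing in \eqref{e:H1 non norm}--\eqref{e:H4 non norm} and in \eqref{e:u_r limitata C1alfa 2} depend on $x_0$ only through $K$ (via $\mathrm{dist}(K, \partial\Omega)$ and the global norms of $\mathbb{A}$ and $f$), so that the final constant in \eqref{e:Weiss} is genuinely uniform on $K \cap \Gamma_u$. Since the transformation $\LL(x_0) = f^{-1/2}(x_0)\mathbb{A}^{1/2}(x_0)$ has eigenvalues controlled from above and below by $c_0, \|f\|_{L^\infty}, \Lambda$, this presents no substantive obstacle.
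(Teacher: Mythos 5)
Your argument matches the paper's approach exactly: the paper establishes Theorem~\ref{t:Weiss} precisely by noting that the affine map $\LL(x_0)$ reduces to the normalization \eqref{e:normalization}, verifying (H1), (H2), (H4) and the bound \eqref{e:bound EEE_u} for the transformed quantities via \eqref{e:H1 non norm}--\eqref{e:H4 non norm} and \eqref{e:u_r limitata C1alfa 2}, and then invoking Proposition~\ref{p:Weiss normalized}. Your reconstruction of the uniformity bookkeeping and the deduction of \eqref{e:Phi(r)-Phi(0)} from monotonicity plus the lower bound on $\Phi_{u_{\LL(x_0)}}$ are both correct.
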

For what concerns Monneau's quasi-monotonicity formula we may apply Proposition~\ref{p:Monneau normalized} to $u_{\LL(x_0)}$ 
on condition that \eqref{e:bound E_u+H_u} is satisfied. 
This follows from the discussion above, and thanks to \cite[Theorem~2.1 and Remark~2.2]{Li} which provide a modulus of continuity for $\nabla u_{\LL(x_0)}$ depending only on $n$, $c_0$, $\Lambda$, 
$\overline{\omega}_{\mathbb{A}}$, and $\|f\|_{L^\infty(\Omega)}$ 
(cf. \eqref{e:H1 non norm}-\eqref{e:H4 non norm}).
\begin{theorem}[Monneau's quasi-monotonicity formula]\label{t:Monneau}
Assume (H1), (H2) and (H5) with $a=1$. 
Let $u$ be the minimizer of $\mathscr{E}$ over 
$\mathbb{K}_{\psi,g}$. If $K\subset\Omega$ is a compact set and 
\eqref{e:Phiv} holds for $x_0\in K\cap \Gamma_u$, then there exists a constant $C=C(n,c_0,\Lambda,\|f\|_{L^\infty},K)>0$ such that the function 
 \begin{align}\label{e:Monneau}
  \big(0,{\textstyle{\frac14}}\mathrm{dist}(K,\partial\Omega)\big)\ni r\longmapsto   %\frac{1}{r^{n+3}}
  \int_{\partial B_1} ( u_{\LL(x_0),r}-v)^2\,\d x
  + C\gamma^2\int_0^r\frac{dt}{t}\int_0^t\frac{\overline{\omega}(s)}{s}\d s\,.
%+C\gamma\int_0^r\frac{\omega(t)}{t}\,dt
\end{align}
is nondecreasing, where $v$ is any $2$-homogeneous polynomial solution of \eqref{e:v}, and $\overline{\omega}$ 
is the modulus of continuity provided by Theorem~\ref{t:Weiss}. 
\end{theorem}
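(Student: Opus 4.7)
My plan is to deduce the general statement as a direct consequence of Proposition~\ref{p:Monneau normalized} via the affine change of variables $x \mapsto x_0 + \LL(x_0) x$ set up at the beginning of Section~\ref{ss:WM qmon}. Under this transformation $u$ becomes $u_{\LL(x_0)}$, which minimizes the energy $\mathscr{E}_{\LL(x_0)}$ in \eqref{e:enrgA} on the corresponding convex set, and whose coefficients $\mathbb{C}_{\LL(x_0)}$ and $f_{\LL(x_0)}/f(x_0)$ obey the normalization \eqref{e:normalization} together with (H1), (H2) and (H5) with $a=1$, uniformly in $x_0\in K$, thanks to \eqref{e:H1 non norm}--\eqref{e:H4 non norm}, with moduli $\overline{\omega}_{\mathbb{A}}$ and $\overline{\omega}_f$ depending only on $n$, $c_0$, $\Lambda$, $\omega_{\mathbb{A}}$ and $\omega_f$. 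The assumed singular condition at $x_0$ is preserved under this change (it becomes $\Phi_{u_{\LL(x_0)}}(0^+)=\theta$), so the hypothesis of Proposition~\ref{p:Monneau normalized} is met at the origin.

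The only substantial verification needed before invoking that proposition is the a priori bound \eqref{e:bound E_u+H_u} for $u_{\LL(x_0),r}$, uniformly in $x_0\in K$ and in sufficiently small $r$. The $L^\infty$-part of \eqref{e:bound E_u+H_u} already follows from Proposition~\ref{p:quadratic growth detachment}, cf.\ \eqref{e:u_r limitata C1alfa 2}; what is new is the $L^\infty$ bound on the gradient. For this I would apply \cite[Theorem~2.1 and Remark~2.2]{Li} directly to the rescaled function $u_{\LL(x_0),r}$, which satisfies on $B_3$ (say) an elliptic equation in divergence form whose matrix field $x\mapsto \mathbb{C}_{\LL(x_0)}(rx)$ is Dini continuous with a modulus uniform in $x_0$ and non-increasing in $r$, and whose right-hand side is uniformly bounded by $\|f\|_{L^\infty(\Omega)}/c_0$. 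Combined with the $L^\infty$-bound just recalled, Li's estimate produces a uniform $C^1$ bound for $u_{\LL(x_0),r}$ on $B_2$, which is exactly \eqref{e:bound E_u+H_u} for some $\gamma=\gamma(n,c_0,\Lambda,\|f\|_{L^\infty},\overline{\omega}_{\mathbb{A}},K)$.

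Once \eqref{e:bound E_u+H_u} is in place, Proposition~\ref{p:Monneau normalized} applied to $u_{\LL(x_0)}$ delivers the desired monotonicity of the quantity in \eqref{e:Monneau}, with the error term controlled by $\overline{\omega}=\overline{\omega}_{\mathbb{A}}+\overline{\omega}_f$. The main obstacle is the scale-invariance of the $C^1$ estimate: applying Li's regularity to the rescaled equation directly, rather than rescaling a pointwise $C^1$ bound for $u_{\LL(x_0)}$ (which would introduce an unwanted factor $1/r$), is what makes the gradient bound independent of $r$. The reason double-Dini continuity (rather than just Dini, as sufficed for Weiss') is needed here is twofold: Li's $C^1$ estimate itself requires Dini regularity, and the correction term in \eqref{e:Monneau} is the iterated integral $\int_0^r \tfrac{\d t}{t}\int_0^t \tfrac{\overline{\omega}(s)}{s}\,\d s$, whose finiteness is exactly \eqref{H4} with $a=1$ and is automatic from (H5).
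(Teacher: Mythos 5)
Your proposal is correct and mirrors the paper's proof: reduce to Proposition~\ref{p:Monneau normalized} by the affine change of variables, then establish the uniform $C^1$ bound \eqref{e:bound E_u+H_u} by invoking \cite[Theorem~2.1 and Remark~2.2]{Li}. Your observation that Li's interior estimate must be applied to the rescaled equation satisfied by $u_{\LL(x_0),r}$ directly---exploiting the uniform $L^\infty$ bound from quadratic growth \eqref{e:quadratic growth0} and the fact that the rescaled Dini modulus $\overline{\omega}_{\mathbb{A}}(r\,\cdot)\leq\overline{\omega}_{\mathbb{A}}(\cdot)$ does not deteriorate---is exactly the scale-invariant reading the paper's terse justification intends, and correctly identifies the $1/r$ pitfall one would otherwise face in naively rescaling a modulus of continuity for $\nabla u_{\LL(x_0)}$.
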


\section{Free boundary analysis}\label{s:applications}

The regularity of the free boundary of the minimizer $u$ of 
$\mathscr{E}$ on $\mathbb{K}_{\psi,g}$ can be established 
thanks to the Weiss' and Monneau's quasi-monotonicity formulas proved in Section~\ref{s:q-mon formula} at least if assumption 
(H3) is satisfied.
In doing this we follow the approach introduced in \cite{Weiss,Monneau03} for the classical obstacle problem related to the Dirichlet energy, and developed in \cite{FoGeSp15,Ger17, FoGerSp17, FoGerSp20} both for linear elliptic operators in divergence form %quadratic forms with matrix of 
and in the nonlinear setting with suitable smoothness assumptions
(see also \cite{PSU} for a systematic presentation).

In particular, in this section we improve upon \cite[Theorems~4.12 and 4.14]{FoGeSp15}, \cite[Theorem~1.3]{Ger17} and 
\cite[Theorem~2.1]{FoGerSp20}, since in all those cases the matrix field $\mathbb{A}$ turns out to be in particular H\"older continuous due to Sobolev type embeddings. 

In the ensuing proof we will highlight only the substantial changes since the arguments are essentially those given in 
\cite{FoGeSp15}.

\begin{proof}[Proof of Theorem~\ref{t:linear}]
First recall that we may establish the conclusions for the function $u=w-\psi$ introduced in Section~\ref{s:prel}.
Given this, the only minor change to be done to the arguments in \cite[Section~4]{FoGeSp15} is related to the 
freezing of the energy where the regularity of the coefficients plays a substantial role. 
More precisely, under assumption \eqref{e:normalization} we have in view of Lemma~\ref{l:Dir vs A}
 \[
 \left|\int_{B_1}\big(\mathbb{A}(rx)\nabla v, \nabla v\rangle + 2f(rx)v\big)\d x - %\int_{B_1}\big(|\nabla v|^2 + 2v\big)\d x
 \EEE_v(1)\right|\leq \omega(r)\EEE_v(1)
 \]
 for all $v\in W^{1,2}(B_1)$.
 
We then describe shortly how to infer all the conclusions. We start off recalling that the 
quasi-monotonicity formulas established in \cite[Section~3]{FoGeSp15} 
are to be substituted by those in Section~\ref{s:q-mon formula}.
Then the $2$-homogeneity of blow up limits in \cite[Proposition~4.2]{FoGeSp15} now follows from Theorem~\ref{t:Weiss}.
Nondegeneracy of blow up limits is contained in Proposition~\ref{p:quadratic growth detachment} (see \cite[Lemma~4.3]{FoGeSp15}).
The classification of blow up limits is obtained exactly as in \cite[Proposition~4.5]{FoGeSp15}.
Uniqueness of blow up limits at regular points, that follows from \cite[Lemma~4.8]{FoGeSp15}, can be obtained with essentially 
no difference. The proofs of \cite[Propositions~4.10,~4.11, Theorems~4.12,~4.14]{FoGeSp15} remain unchanged. 
\end{proof}
\begin{remark}\label{r:extensions}
Thanks to the quasi-monotonicity formulas in Section~\ref{s:q-mon formula} we expect to be possible to deduce results 
analogous to Theorem~\ref{t:linear} under assumption (H5) with $a>2$ (cf. \cite[Theorem~2.1]{FoGerSp20} for such a statement 
if $\mathbb{A}\in W^{1,p}(\Omega;\R^{n\times n})$, $p>n$, and $f$ satisfies (H5) with $a>2$). 
Moreover, in view of Remark~\ref{r:Weiss generalized} it is likely that analogous results hold even in case $\widetilde{\omega}$ 
there satisfies a double-Dini continuity condition. 
We do not insist on this issue here %, and leave it to the potentially interested readers, 
since several arguments should be carefully checked along the proofs of  \cite{FoGeSp15}. 
%as for instance some qualified continuity properties of the map 
%$\Omega\ni x\mapsto\LL(x)$ (cf. the proofs of \cite[Theorems~4.12 and 4.14]{FoGeSp15}).
\end{remark}

%%%%%%%%%%%%%%%%%%%%%%%%%%%%%%%%%%%%%%%%%%%%%%%%%%%%%%%%%%%%%%%%%
%
%	BIBLIOGRAPHY
%
%%%%%%%%%%%%%%%%%%%%%%%%%%%%%%%%%%%%%%%%%%%%%%%%%%%%%%%%%%%%%%%%%

\bibliographystyle{plain}

\begin{thebibliography}{99}
\setlength{\itemsep}{3pt}
\newcommand{\aut}[1]{\textsc{#1}}

\bibitem{Blank01}
\aut{I.~Blank.}
\newblock Sharp results for the regularity and stability of the free boundary in the obstacle problem. 
\newblock \textit{Indiana Univ. Math. J.} 50 (2001), no. 3, 1077--1112.

\bibitem{BlankHao15}
\aut{I.~Blank, Z.~Hao.}
\newblock The mean value theorem and basic properties of the obstacle problem for divergence form elliptic operators.
\newblock \textit{Communications in Analysis and Geometry} 23 (2015), no. 1, 129--158.

\bibitem{BlankHao15bis}
\aut{I.~Blank, Z.~Hao.}
\newblock Reifenberg flatness of free boundaries in obstacle problems with VMO ingredients. 
\newblock \textit{Calc. Var. Partial Differential Equations} 
53 (2015), no. 3-4, 943--959.

\bibitem{Caf77}
\aut{L. A.~Caffarelli.} 
\newblock The regularity of free boundaries in higher dimensions. 
\newblock \textit{Acta Math.} 139 (1977), no. 3-4, 155--184.

\bibitem{Caf80} 
\aut{L. A.~Caffarelli.} 
\newblock Compactness methods in free boundary problems. 
\newblock \textit{Comm. Partial Differential Equations} 
5 (1980), no. 4, 427--448.

\bibitem{Caf98}
\aut{L. A.~Caffarelli.} 
\newblock The obstacle problem revisited. 
\newblock \textit{J. Fourier Anal. Appl.} 4 (1998), no. 4-5, 383--402.

\bibitem{Caf98-Fermi}  
\aut{L. A.~Caffarelli.} 
\newblock The obstacle problem revisited. 
\newblock \textit{Lezioni Fermiane. [Fermi Lectures]} 
Accademia Nazionale dei Lincei, Rome; 
Scuola Normale Superiore, Pisa, 1998. ii+54 pp.


\bibitem{CS} 
\aut{L. A.~Caffarelli, S.~Salsa.} 
\newblock A geometric approach to free boundary problems. 
\newblock Graduate Studies in Mathematics, 68. 
American Mathematical Society, Providence, RI, 2005. x+270 pp.

%\bibitem{CoSpVe17}
%\aut{M.~Colombo, L.~Spolaor, B.~Velichkov.}
%\newblock A logarithmic epiperimetric inequality for the obstacle problem.
%\newblock \textit{Geom. Funct. Anal.} (2018). https://doi.org/10.1007/s00039-018-0451-1. 

\bibitem{DK17} 
\aut{K.~Dong, S.~Kim.} 
\newblock On $C^1$, $C^2$, and weak type-$(1,1)$ estimates for linear elliptic operators. 
\newblock \textit{Comm. Partial Differential Equations} 
42 (2017), no. 3, 417--435.

\bibitem{DEK18} 
\aut{K.~Dong, L.~Escauriaza, S.~Kim.} 
\newblock On $C^1$, $C^2$, and weak type-$(1,1)$ estimates for linear elliptic operators: part II. 
\newblock \textit{Math. Ann.} 370 (2018), no. 1-2, 447--489.


\bibitem{EG}
\aut{L.C.~Evans, R.F.~Gariepy.}
\newblock Measure theory and fine properties of functions. Revised edition. 
\newblock Textbooks in Mathematics. CRC Press, Boca Raton, FL, 2015. xiv+299 pp.

%\bibitem{FiSe17}
%\aut{A.~Figalli, J.~Serra.}
%\newblock On the fine structure of the free boundary for the classical obstacle problem.
%\newblock Preprint  arXiv:1709.04002 


\bibitem{FoGeSp15}
\aut{M.~Focardi, M.~S.~Gelli, E.~Spadaro.}
\newblock Monotonicity formulas for obstacle
problems with Lipschitz coefficients.
\newblock \textit{Calc. Var. Partial Differential Equations}
54 (2015), no. 2, 1547--1573.

 \bibitem{FoGerSp17}
\aut{M.~Focardi, F.~Geraci, E.~Spadaro.}
\newblock The classical obstacle problem for nonlinear variational energies.
\newblock \textit{Nonlinear Anal.} 154 (2017), 71--87. 

\bibitem{FoGerSp20}
\aut{M.~Focardi, F.~Geraci, E.~Spadaro.}
\newblock Quasi-monotonicity formulas for classical obstacle problems with Sobolev coefficients and applications.
\newblock \textit{J. Optim. Theory Appl.} 184 (2020), no. 1, 125--138.

\bibitem{Fu90}
\aut{M. Fuchs.} 
\newblock H\"older continuity of the gradient for degenerate variational inequalities. 
\newblock \textit{Nonlinear Anal. TMA} 15 (1) (1990), 85--100.

\bibitem{FuMi00}
\aut{M. Fuchs, G. Mingione.} 
\newblock Full $C^{1,\alpha}$-regularity for free and constrained local minimizers of elliptic variational integrals with
nearly linear growth. 
\newblock \textit{Manuscripta Math.} 102 (2000), 227--250.

\bibitem{GSVG14}
\aut{N.~Garofalo, M.~Smit Vega Garcia.}
\newblock New monotonicity formulas and the optimal regularity 
in the Signorini problem with variable coefficients. 
\newblock \textit{Adv. Math.} 262 (2014), 682--750.


\bibitem{Ger17} 
\aut{F.~Geraci.}
\newblock The classical obstacle problem with coefficients in fractional Sobolev spaces.
\newblock \textit{Ann. Mat. Pura Appl.} (4) 197 (2018), no. 2, 549--581. 


%\bibitem{Geraci}
%\aut{F.~Geraci.}
%\newblock The classical obstacle problem for nonlinear variational energies and related problems.
%\newblock PhD thesis, University of Firenze (2016).

% \bibitem{GiaqGiu82} 
% \aut{M.~Giaquinta, E.~Giusti.}
% \newblock On the regularity of the minima of variational integrals
% \newblock\textit{Acta Math.} 148 (1982), 31--46.
% 
% \bibitem{GiaqGiu84} 
% \aut{M.~Giaquinta, E.~Giusti.}
% \newblock Quasiminima. 
% \newblock\textit{Ann. Inst. H. Poincar\'e Anal. Non Lin\'eaire} 
% 1 (1984), no. 2, 79--107.


\bibitem{GT}
\aut{D.~Gilbarg, N.S.~Trudinger.}
\newblock Elliptic partial differential equations of second order.
\newblock  Reprint of the 1998 edition. Classics in Mathematics. Springer-Verlag, Berlin, 2001. xiv+517 pp.

\bibitem{JPSm} 
\aut{S.~Jeon, A.~Petrosyan, M.~Smit Vega Garcia.}
\newblock Almost minimizers for the thin obstacle problem with variable coefficients.
\newblock arXiv:2007.07349


% 
% \bibitem{Gi03}
%  \aut{E.~Giusti.}
%  \newblock Direct methods in the calculus of variations. 
%  \newblock World Scientific Publishing Co., Inc., 
%  River Edge, NJ, 2003. viii+403 pp.


% \bibitem{HL}
%\aut{Q.~Han, F.~Lin.}
% \newblock Elliptic partial differential equations. Second edition. 
% \newblock Courant Lecture Notes in Mathematics, 
% 1. Courant Institute of Mathematical Sciences, New York; 
%5 American Mathematical Society, Providence, RI, 2011. x+147 pp.

 
 \bibitem{KS}
 \aut{D.~Kinderlehrer, G.~Stampacchia.}
  \newblock An introduction to variational inequalities and their applications. 
 \newblock Pure and Applied Mathematics, 88. Academic Press, Inc., New York-London, 1980. xiv+313 pp.

\bibitem{KRS16}
\aut{H.~Koch, A.~R\"uland, W.~Shi.}
\newblock The variable coefficient thin obstacle problem: Carleman inequalities.
\newblock \textit{Adv. Math.} 301 (2016), 820--866.

\bibitem{KRS17}
\aut{H.~Koch, A.~R\"uland, W.~Shi.}
\newblock The variable coefficient thin obstacle problem: optimal regularity and 
regularity of the regular free boundary. 
\newblock \textit{Ann. Inst. H. Poincaré Anal. Non Lin\'eaire} 34 (2017), no. 4, 845--897.

\bibitem{Kukavica}
\aut{I.~Kukavica.} 
\newblock Quantitative uniqueness for second-order elliptic operators.  
\newblock \textit{Duke Math. J.} 91  (1998),  no. 2, 225--240. 

\bibitem{Li}
\aut{Y.Y.~Li.}
\newblock On the $C^1$ regularity of solutions to divergence form elliptic systems with Dini-continuous coefficients. 
\newblock\textit{Chin. Ann. Math.} Ser. B 38 (2017), no. 2, 489--496.

\bibitem{Lib99} 
\aut{G.~Lieberman.} 
\newblock H\"older continuity of the gradient of solutions of uniformly parabolic equations with conormal boundary conditions. 
\newblock \textit{Ann. Mat. Pura Appl.} 148 (1987), 77--99.


 
% \bibitem{Mir63}
% \aut{C.~Miranda.}
% \newblock Sulle equazioni ellittiche del secondo ordine di tipo non variazionale, a coefficienti discontinui. (Italian). 
% \newblock\textit{Ann. Mat. Pura Appl.} (4) 63 1963 353--386.



\bibitem{Monneau03}
\aut{R.~Monneau.}
\newblock On the number of singularities for the obstacle problem in two dimensions.  
\newblock \textit{J. Geom. Anal.}  13  (2003),  no. 2, 359--389.

\bibitem{Monneau09}
\aut{R.~Monneau.}
\newblock Pointwise estimates for Laplace equation. Applications to the free boundary of the obstacle problem with Dini coefficients.
\newblock \textit{J. Fourier Anal. Appl.} 15 (2009), no. 3, 279--335.



% \bibitem{PS07}
% \aut{A.~Petrosyan, H.~Shahgholian.}
% \newblock Geometric and energetic criteria for the free boundary regularity in an obstacle-type problem. 
% \newblock\textit{Amer. J. Math.} 129 (2007), no. 6, 1659--1688.
% 
 \bibitem{PSU}
 \aut{A.~Petrosyan, H.~Shahgholian, N.~Uraltseva.}
 \newblock Regularity of free boundaries in obstacle-type problems.
 \newblock\textit Graduate Studies in Mathematics, 136. American Mathematical Society, Providence, RI, 2012. x+221 pp.

\bibitem{RS17}
\aut{A.~R\"uland, W.~Shi.}
\newblock Optimal regularity for the thin obstacle problem with $C^{0,\alpha}$ coefficients
\newblock \textit{Calc. Var. Partial Differential Equations} 56 (2017), no. 5, Art. 129, 41 pp.


%\bibitem{Ural87}
%\aut{N.~Uraltseva.}
%\newblock Regularity of solutions of variational inequalities.
%\newblock \textit{Russian Math. Surveys} 42:6 (1987), 191--219.



\bibitem{Weiss} 
\aut{G.~S.~Weiss.} 
\newblock A homogeneity improvement approach to the obstacle problem. 
\newblock \textit{Invent. Math.} 138 (1999), no. 1, 23--50.
\end{thebibliography}

\end{document}